\newtheorem{theorem}{Theorem}[section]
\newtheorem{proposition}[theorem]{Proposition}
\newtheorem{corollary}[theorem]{Corollary}
\theoremstyle{definition}
\newtheorem{definition}[theorem]{Definition}
\newtheorem{example}[theorem]{Example}
\newtheorem{remark}[theorem]{Remark}
\numberwithin{equation}{section}
\newcommand{\de}{\delta}
\newcommand{\p}{\pi}
\newcommand{\I}{{\mathcal{I}}}
\newcommand{\lll}{{\mathcal{L}}}
\newcommand{\kk}{{\mathcal{K}}}
\newcommand{\ti}{{\mathcal{T}}}
\newcommand{\qq}{A/I}
\begin{document}

\title{ Relative amenability of Banach algebras}

\author{H. Ghahramani, W. Khodakarami, E. Feizi}

\address{Department of Mathematics, University of Kurdistan, P. O. Box 416, Sanandaj, Iran.}\email{h.ghahramani@uok.ac.ir; hoger.ghahramani@yahoo.com}
\address{Department of Mathematics, University of Kurdistan, P. O. Box 416, Sanandaj, Iran.}\email{w.khodakarami@sci.uok.ac.ir; wania.khodakarami@gmail.com}
\address{ Mathematics Department, Bu-Ali Sina University, 65174-4161, Hamadan, Iran.}\email{efeizi@basu.ac.ir}

\subjclass[2000]{Primary 46H10; 46H20; 46H99, 43A99.}
\keywords{amenable, relative amenable, Banach algebra, triangular Banach algebra, Banach algebra associated to a locally compact group}

\begin{abstract}
Let $A$ be a Banach algebra and $I$ be a closed ideal of $A$. We say that $A$ is amenable relative to $I$, if $A/I$ is an amenable Banach algebra. We study the relative amenability of Banach algebras and investigate the relative amenability of triangular Banach algebras and Banach algebras associated to locally compact groups. We generalize some of the previous known results by applying the concept of relative amenability of Banach algebras, especially, we present a generalization of Johnson's theorem in the concept of relative amenability. 
\end{abstract} \maketitle

\section{Introduction }

Let $A$ is a Banach algebra and $X$ be a Banach $A$-bimodule. A \textit{derivation} $D : A \to X$  is a bounded linear map satisfying
\[ D(ab) = D(a)\cdot b + a\cdot D(b) \quad  (a, b\in A).\]
A derivation $D$ is called \textit{inner derivation}, if there is $x\in X$ such that
\[ D(a)=a\cdot x-x\cdot a \quad (a\in A).\]
The space of all derivations from $A$ into $X$ is denoted by $Z^1(A,X)$, and $N^1(A,X)$ is the space of all inner derivations from $A$ into $X$. The \textit{first cohomology group} of $A$ with coefficient in $X$ is the quotient space
\[H^1(A,X)=Z^1(A,X)/N^1(A,X). \]
The dual space $X^*$ of Banach $A$-bimodule $X$, is a Banach $A$-bimodule with respect to the module operations defined by 
\[< a\cdot\lambda, x >=<\lambda, x\cdot a >, \quad < \lambda\cdot a, x >=<\lambda, a\cdot x >,\]
where $a\in A$, $ x\in  X$ and $\lambda \in X^*$; in this case $X^*$ is called the \textit{dual module} of $X$.
The Banach algebra $A$ is called \textit{amenable} if $H^1(A,X^*)=0$ for every Banach $A$-bimodule $X$ and \textit{weak amenable} if $H^1(A,A^*)=0$.
\par 
The notion of an amenable Banach algebra was introduced by Johnson in 1972, and it was based on the amenability of locally compact group $G$ \cite{j}. One of the basically consequences was that for a locally compact group $G$, the group algebra $L^1(G)$ is amenable if and only if the group $G$ is amenable. Since then amenability has become a main topic in Banach algebra theory and in harmonic analysis and the theory of amenability of Banach algebras has a fairly long history, see \cite{dals, runde} for a comprehensive survey of results of this type.
\par 
One of the interesting subjects is discussion about  the semigroup version of the Johnson's main theorem. However mentioned theorem is not true for topological semigroups. As an example, Duncan and Namioka in \cite{duncan}, showed that if $S$ is an amenable inverse semigroup, $l^1(S)$ isn't amenable, generally.
Also they proved that for a suitable closed ideal $I$ of $\ell^{1}(S)$, $\ell^{1}(S)/I\cong \ell^{1}(G_{S})$, which $G_{S}$ is group congruence of inverse semigroup $S$. So by \cite[Theorem 1]{duncan} and Johnson's theorem, we may observe that $\ell^{1}(S)/I$ is amenable if and only if $S$ is amenable. Hence, we can see a relationship between amenability of a quotient of Banach algebra $\ell^{1}(S)$ and amenability of inverse semigroup $S$.
\par 
Recall that a \textit{triangular Banach algebra} $Tri(A,M, B)$ is a Banach algebra of the form
\[ Tri(A,M,B)=\bigg\lbrace \begin{pmatrix}
  a & m \\
  0 & b
\end{pmatrix} \, : \, a\in A, m\in M, b\in B \bigg\rbrace, \]
under the usual matrix operations and $l^1$-norm, where $A$ and $B$ are Banach algebras and $M$ is a Banach $(A, B)$-bimodule. In \cite{medghalchi}, it is proven that if $M\neq 0$, then $Tri(A,M,B)$ is not amenable, and hence if $M =0$ then $Tri(A,M,B)$ is amenable if and only if $A$ and $B$ are amenable. Let $\kk=\begin{pmatrix}
  0 & M \\
  0 & 0
\end{pmatrix}$, then $\kk$ is a closed ideal in $Tri(A,M,B)$ and $Tri(A,M,B)/\kk\cong A\oplus B$ (isometric isomorphism). So $Tri(A,M,B)/\kk$ is amenable if and only if $A$ and $B$ are amenable.
\par 
Relative properties, specially relative amenability is one of
the most interesting concepts in group theory, and in the
specific case, so is co-amenability. Let $G$ be a locally compact
group and $N$ be a normal subgroup of $G$, $N$ is co-amenable in
$G$ if  the quotient group $G/N$ is amenable, see \cite{monod}. Popa in \cite{popa}, defined and studied a natural notion of amenability for a finite von Neumann algebras $M$ relative to a von Neumann subalgebra $N$ (or co-amenability of $N$ in $M$).
A longstanding open question of Connes \cite{conz} asks whether
any finite von Neumann algebra embeds into an ultraproduct of
finite-dimensional matrix algebras. Song in \cite{so}, proves
that von Neumann algebras which satisfying Popa's co-amenability
have Connes's embedding property. So, the relative amenability
concept of von Neumann algebra is a very useful and interesting
notion. The von Neumann algebra $M$ is amenable relative to von Neumann
subalgebra $N$ if there exists a norm one projection of
$<M,N>$ onto $M$. Monod and Popa in \cite [Corollary
7]{monod}, studied the relation between relative amenability  of
von Neumann algebra with the relative amenability of subgroups, as the following: Let $H$ be a normal subgroup of the discrete group $G$. Then  the group von Neumann algebras $\mathcal{L}(G)$ is amenable relative to $\mathcal{L}(H)$ if and only if $H$ is co-amenable in $G$. So, we can see a correspondence between the relative amenability of group von Neumann algebra and the co-amenability of underlie group. 
 \par 
Given these issues, the question arises as if $H$ is a closed normal subgroup of a locally compact group $G$, is there a suitable closed ideal of $L^1(G)$ such that $L^1(G)/I$ is amenable if and only if $G/H$ is an amenable group?
\par 
This notations leads us to consider the idea of studying the amenability of Banach algebra $A$, relative to a closed ideal $I$ of $A$. We have seen above that there are two types of Banach algebra that they aren't amenable generally, but an appropriate quotient of them is amenable. With respect to these cases, and with the motivation of relative amenability of groups and the question raised, we introduce the following concept in this paper which is a natural generalization of the concept of amenability of Banach algebras. We say that the Banach algebra $A$ is \textit{amenable relative to closed ideal} $I$ (briefly say $A$ is $I$-amenable), if $A/I$ is an amenable Banach algebra. In this article we study the properties of relative amenability of a Banach algebra and we also ask some questions about this concept. Then we investigate the relative amenability of triangular Banach algebras and Banach algebras associated to locally compact groups. We answer the questions in some cases and also generalize some of the previous known results by applying the concept of relative amenability of Banach algebras. Especially, we present a generalization of Johnson's theorem in the concept of relative amenability which is in fact the answer to the question raised above about the relative amenability of the group algebras $L^1(G)$. 
\par 
This paper is organized as follows. In Section 2, we introduced relative amenability, and verify several primarily properties of it. Also, we study some fundamental problems of relative amenability of Banach algebras and ask some questions about this concept. Section 2 is devoted to examining of hereditary properties for relative
amenability of Banach algebras. In Section 3, we study the relative amenability of triangular Banach algebras and Banach algebras associated to locally compact groups, and we get several results in this regard, especially, we generalize some of the previous known results in the concept of relative amenability. Moreover, we answer the questions raised is Section 2 in some cases.
\newpage
\section{Primarily properties }
\quad \quad
 In this section, we introduced relative amenability, and verify several of its primarily properties. Also, we study  some
 fundamental problems of relative amenability of Banach algebras.
\begin{definition}
Let $A$ be a Banach algebra and $I$ be a closed ideal in $A$. We call that $A$ is amenable relative to $I$ or briefly, we say that $A$ is $I$-amenable, if $\qq$ is an amenable Banach algebra.
\end{definition}
In the introduction section, we observed numbers of non-amenable Banach algebras which they were  $I$-amenable, for some suitable closed ideal $I$ of them. Also, it is clear that, if $A$ is an amenable Banach algebra then $0$ is a closed ideal in it, and $A$ is amenable relative to $0$. It  infer that this is a non-obvious definition, and we can consider the amenability relative to a closed ideal as a natural generalization of the amenability notion. Note that if $I$ is a closed ideal in a Banach algebra $A$ which is amenable, then by \cite[Corollary 2.3.2 and Theorem 2.3.10]{runde} $A$ is amenable if and only if $A$ is $I$-amenable. So in this case $I$-amenability coincides with the amenability.
\begin{remark}\label{1}
Let $A$ be a Banach algebra and $I$ be a closed ideal in $A$. If $X$ is a Banach $\qq$-bimodule, then it becomes to a Banach
$A$-bimodule by the following module operation:
 \[ a\cdot x=(a+I)\cdot x, \quad  x\cdot a=x\cdot (a+I)\quad (x\in X, a\in A).\]
 In this case, we denote the first group cohomology  of $A$ with coefficient in $X$ by  $H_I^1(A,X)$ (the achieved  module with the above module operation).
\end{remark}
Now, this notion gives some necessary and sufficient conditions to
consider amenability of $A$ relative to closed ideal $I$, in
terms of the first group cohomology $H_I^1(A,X^*)$, where $X$ is a
Banach $\qq$-bimodule.
\begin{theorem}\label{quo}
Let $A$ be a Banach algebra and $I$ be a closed ideal of it.
\begin{itemize}
\item[(i)] If $H_I^1(A,X^*)=0$ for every Banach $A/I$-bimodule $X$, then $A$ is $I$-amenable.
\item[(ii)] If $A$ is $I$-amenable and $\overline{I^2}=I$, then
$H_I^1(A,X^*)=0$, for every Banach $A/I$-bimodule $X$.
\end{itemize}
\end{theorem}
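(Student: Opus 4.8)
The plan is to use the quotient map $\pi\colon A\to A/I$ to transfer derivations back and forth between $A$ and $A/I$, exploiting the fact that any dual $A/I$-bimodule $X^{*}$, viewed as an $A$-bimodule via Remark \ref{1}, carries a trivial $I$-action since $\pi(I)=0$.

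For part (i), I would take an arbitrary Banach $A/I$-bimodule $X$ and an arbitrary derivation $D\colon A/I\to X^{*}$ and show that $D$ is inner; this yields $H^{1}(A/I,X^{*})=0$ and hence the amenability of $A/I$. The observation is that $\tilde D:=D\circ\pi$ is a bounded derivation from $A$ into $X^{*}$ for the induced module action, so $\tilde D\in Z^{1}_{I}(A,X^{*})$. By hypothesis this cohomology vanishes, so there is $\lambda\in X^{*}$ with $\tilde D(a)=a\cdot\lambda-\lambda\cdot a$ for all $a\in A$. Since $\pi$ is surjective and the module actions factor through $\pi$, the same $\lambda$ implements $D$ as an inner derivation on $A/I$. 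This direction is essentially formal and presents no real difficulty.

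For part (ii), I would fix a Banach $A/I$-bimodule $X$ and a derivation $D\colon A\to X^{*}$, now with the $A$-module structure inherited from $A/I$, and aim to show that $D$ is inner. The crucial first step is to prove that $D$ annihilates $I$. Because $\pi(I)=0$, the ideal $I$ acts trivially on $X^{*}$, so for $x,y\in I$ the derivation identity collapses to $D(xy)=D(x)\cdot y+x\cdot D(y)=0$. Hence $D$ vanishes on the linear span of products from $I$, and invoking the continuity of $D$ together with the hypothesis $\overline{I^{2}}=I$ forces $D|_{I}=0$. This is precisely the step where the assumption $\overline{I^{2}}=I$ is indispensable, and I expect it to be the main point of the argument.

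Once $D|_{I}=0$, the map $D$ factors as $D=\bar D\circ\pi$ for a well-defined bounded map $\bar D\colon A/I\to X^{*}$, and a routine verification confirms that $\bar D$ is a derivation into the dual $A/I$-bimodule $X^{*}$. Since $A$ is $I$-amenable, $A/I$ is amenable, so $\bar D$ is inner, implemented by some $\lambda\in X^{*}$. Composing with $\pi$ then gives $D(a)=a\cdot\lambda-\lambda\cdot a$ for all $a\in A$, so $D$ is inner and $H^{1}_{I}(A,X^{*})=0$, completing the proof.
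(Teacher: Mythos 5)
Your proposal is correct and follows essentially the same route as the paper's proof: part (i) by pulling back a derivation along the quotient map $\pi$ and observing that the implementing functional also implements the original derivation, and part (ii) by using the trivial $I$-action to get $D(xy)=0$ for $x,y\in I$, invoking continuity and $\overline{I^{2}}=I$ to conclude $D|_{I}=0$, and then factoring $D=\bar D\circ\pi$ so that amenability of $A/I$ makes $\bar D$ (and hence $D$) inner. The only detail you gloss over, which the paper handles by citing a standard factorization result, is the continuity of $\bar D$, but this is routine for a bounded map vanishing on a closed subspace factored through the quotient.
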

\begin{proof}
$(i)$ Let $X$ be a  Banach $A/I$-bimodule, and $\de:A/I\to X^*$ be a
continuous derivation. Consider $X$ as a Banach $A$-bimodule
(similarly to remark ~\ref{1}), and define $\widetilde{\de}:A\to
X^*$ by  $\widetilde{\de}=\de  \p$, where $\p:A\to A/I$ is the
quotient map. So  $\widetilde{\de}$ is a continuous derivation,
and by the hypothesis it is inner. Thus there is a $\phi$ in $X^*$
such that $\widetilde{\de}(a)=a\cdot\phi-\phi\cdot a$, for every
$a$ in $A$. Hence there is a $\phi$ in $X^*$ such that
$\de(a+I)=(a+I)\cdot\phi-\phi\cdot (a+I)$, for every $a+I$ in
$A/I$. It concludes that $\de$ is inner, and hence $H^1(A/I,X^*)=0$. Since $X$ is 
an arbitrary Banach $A/I$-bimodule, it follows that $A$ is $I$-amenable.
\par 
$(ii)$ Let $X$ be a  Banach $A/I$-bimodule, it turns to a Banach
$A$-bimodule by the mentioned module operation in Remark~\ref{1}. Assume that $\de:A\to X^*$  be a continuous derivation. If $a,\:b\in I$, then for every $x\in X$, we observe that
 \begin{equation}\label{d0}
 \begin{split}
<\de(ab), x>&=<a \de(b), x>+<\de(a) b, x>\\
&=<\de(b), x\cdot a>+<\de(a), b\cdot x>\\
&=<\de(b), x\cdot  (a+I)>+<\de(a), (b+I) \cdot x>\\
&=0.
\end{split}
\end{equation}
Therefore $\de(ab)=0$ for every $a,\:b\in I$. Assume that $c\in I$. By hypothesis, $c=\lim_{n\to\infty} \sum_{i=1}^{k_n}a_i b_i$, where $a_i , b_i \in I$ and $n, k_n \in \mathbb{N}$ . Since $\de$ is continuous,  $\de(c)=0$, by \eqref{d0}. Hence $\de(I)=0$. Now, we define $\overline{\de}:A/I\to
  X^*$, by $\overline{\de}(a+I)=\de(a)$. Since $\de(I)=0$, it follows that $\overline{\de}$ is well defined. Simply check that $\overline{\de}$ is a derivation. Also $\overline{\de}\p=\de$, where $\p$ is the quotient map. It follows from \cite[Proposition 5.2.2]{dals} that $\overline{\de}$ is continuous. By hypothesis $A/I$ is amenable, so there exists $\phi\in X^*$ such that for all $a\in A$, $\overline{\de}(a+I)=(a+I)\cdot\phi-\phi \cdot (a+I)$. Hence $\de(a)=a \cdot \phi-\phi\cdot a$ for all $a\in A$. Therefore $\de$ is inner and hence $H_I^1(A,X^*)=0$.
\end{proof}
If the closed ideal $I$ is weakly amenable as a Banach algebra or has a bounded approximate identity, then $\overline{I^2}=I$. The next example shows that we can't remove this condition in the statement $(ii)$ of the above theorem.
\begin{example}
Consider the triangular Banach algebra $T=
\begin{pmatrix}
  A & M \\
  0 & B
\end{pmatrix}$, where $A$ and $B$ are amenable unital Banach algebras and $M\neq 0$ is a unital Banach
$(A,B)$-bimodule. Let $I=\begin{pmatrix}
  0 & M \\
  0 & 0
\end{pmatrix}$. Then $I$ is a closed ideal in $T$ and $T/I \cong A\oplus B$. Hence $T$ is $I$-amenable. It is clear that $\overline{I^2}\neq I$, because $I^2=0$. Now, we show that there is a Banach $T/I$-bimodule $X$ such that $H_I^1(T,X^*)\neq 0$.
\par 
Since $T/I \cong A\oplus B$ and $A\oplus B$ is a closed subalgebra of $T$, it follows that $T^{*}$ is a Banach $T/I$-bimodule. So by the following module actions $T^{**}$ is a Banach $T/I$-bimodule (see \cite{f&m}):
\[ (a,b)\cdot \begin{pmatrix}
x^{**} & y^{**} \\
0 & z^{**}
\end{pmatrix}= \begin{pmatrix}
a\cdot x^{**} & a\cdot y^{**} \\
0 & b\cdot z^{**}
\end{pmatrix} \,\, \text{and} \,\,  \begin{pmatrix}
x^{**} & y^{**} \\
0 & z^{**}
\end{pmatrix}\cdot (a,b)= \begin{pmatrix}
x^{**}\cdot a & y^{**}\cdot b \\
0 & z^{**}\cdot b
\end{pmatrix},
\]
where $(a,b)\in A\oplus B$ and $\begin{pmatrix}
x^{**} & y^{**} \\
0 & z^{**}
\end{pmatrix} \in T^{**} =\begin{pmatrix}
A^{**} & M^{**}\\
0 & B^{**}
\end{pmatrix}$. Now from Remark~\ref{1}, by the following module actions $T^{**}$ becomes a Banach $T$-bimodule: 
\[ \begin{pmatrix}
a & m\\
0 & b
\end{pmatrix} \cdot\begin{pmatrix}
x^{**} & y^{**} \\
0 & z^{**}
\end{pmatrix}= \begin{pmatrix}
a\cdot x^{**} & a\cdot y^{**} \\
0 & b\cdot z^{**}
\end{pmatrix}\]
 and
 \[ \begin{pmatrix}
x^{**} & y^{**} \\
0 & z^{**}
\end{pmatrix}\cdot\begin{pmatrix}
a & m\\
0 & b
\end{pmatrix}= \begin{pmatrix}
x^{**}\cdot a & y^{**}\cdot b \\
0 & z^{**}\cdot b
\end{pmatrix},
\]
where $\begin{pmatrix}
a & m\\
0 & b
\end{pmatrix}\in T$ and $\begin{pmatrix}
x^{**} & y^{**} \\
0 & z^{**}
\end{pmatrix} \in T^{**} =\begin{pmatrix}
A^{**} & M^{**}\\
0 & B^{**}
\end{pmatrix}$. We will show that $H_I^1(T,T^{**})\neq 0$. Define the continuous linear map 
$\delta:T \to T^{**}$ by $\delta ( \begin{pmatrix}
  a & m \\
  0 & b
\end{pmatrix})=\begin{pmatrix}
  0 & \widehat{m} \\
  0 & 0
\end{pmatrix}$, where $\widehat{m}$ is the canonical embedding of $m$ in $M^{**}$ and $T^{**}$ is a Banach $T$-bimodule as mentioned above. It is easily checked that $\delta$ is a derivation. We prove that $\delta$ is not inner. Suppose that $\delta$ is inner,
so there is an element $\begin{pmatrix}
x^{**} & y^{**} \\
0 & z^{**}
\end{pmatrix}$ in $T^{**}$ such that for every $ \begin{pmatrix}
  a & m \\
  0 & b
\end{pmatrix}\in T$,
 \begin{equation*}
 \begin{split}
 \delta (\begin{pmatrix}
  a & m \\
  0 & b
\end{pmatrix})& = \begin{pmatrix}
  a & m \\
  0 & b
\end{pmatrix}\cdot\begin{pmatrix}
  x^{**} & y^{**}\\
  0 & z^{**}
\end{pmatrix} -\begin{pmatrix}
   x^{**} & y^{**}\\
  0 & z^{**}
\end{pmatrix}\cdot\begin{pmatrix}
 a & m \\
  0 & b
\end{pmatrix}\\ &=
\begin{pmatrix}
a\cdot x^{**}-x^{**}\cdot a & a\cdot y^{**}-y^{**} \cdot b\\
0 & b\cdot z^{**}-z^{**}\cdot b
\end{pmatrix}
\end{split}
\end{equation*}
 Hence $\widehat{m}=a\cdot y^{**}-y^{**}\cdot b$ for every $a,b\in A$ and $m\in M$. Let
$a=b=0$, then $\widehat{m}=0$ for all $m \in M$ and this infer that $M=0$. This is
a contradiction, because $M\neq 0$. Therefore $H_I^1(T,T^{**})\neq 0$.
\end{example}
Recall that an ideal $N$ of an algebra is \textit{nilpotent}, if $N^k=0$ for some non-negative integer $k$.
\par 
In the following result we provide a necessary condition for relative amenability.
\begin{theorem}
Let $A$ be Banach algebra and $I$ be a closed ideal of $A$ such that $A$ is $I$-amenable. Let $N$ be a non-zero closed nilpotent ideal in $A$ and let $R$ be a closed subspace complement of $N$ in $A$ such that $R+I$ and $N+I$ are closed. Then $(R+I)\cap N \neq 0$.
\end{theorem}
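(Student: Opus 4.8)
The plan is to argue by contradiction: I would assume $(R+I)\cap N = 0$ and show that this forces $A/I$ to contain a nonzero complemented closed nilpotent ideal, which is incompatible with the amenability of $A/I$.

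First I would transfer the whole situation to the quotient $B:=A/I$, which is amenable since $A$ is $I$-amenable. Writing $\p\colon A\to B$ for the quotient map, set $C:=(R+I)/I$ and $J:=(N+I)/I$. Because $R+I$ and $N+I$ are closed and contain $I$, both $C$ and $J$ are closed subspaces of $B$. Since $R+N=A$ we have $C+J=B$, and the hypothesis $(R+I)\cap N=0$ is exactly what yields $(R+I)\cap(N+I)=I$, hence $C\cap J=0$; thus $B=C\oplus J$ is a topological direct sum, so $J$ is complemented in $B$. Moreover $J=\p(N)$ is a closed two-sided ideal, and it is nilpotent: if $N^{k}=0$ then $\p(n_{1})\cdots\p(n_{k})=\p(n_{1}\cdots n_{k})=0$, so $J^{k}=0$. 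This entire paragraph is purely formal bookkeeping, and it is precisely where all three closedness/complement hypotheses get used.

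The heart of the argument is to prove that such a $J$ must vanish. Being complemented, $J$ is in particular weakly complemented in $B$ (if $Q\colon B\to C$ is the bounded projection with $\ker Q=J$, then $Q^{*}$ is a bounded projection of $B^{*}$ onto $J^{\perp}$). A standard structural fact for amenable algebras then applies: a weakly complemented closed ideal of an amenable Banach algebra has a bounded approximate identity (see \cite{runde}). Let $(u_{\lambda})$ be such an approximate identity for $J$. For every $x\in J$ we have $x=\lim_{\lambda}u_{\lambda}x\in\overline{J^{2}}$, so $J=\overline{J^{2}}$; iterating this (or invoking Cohen's factorization theorem) gives $J\subseteq\overline{J^{m}}$ for every $m$, and taking $m=k$ yields $J\subseteq\overline{J^{k}}=\{0\}$. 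Hence $J=0$.

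Finally, $J=(N+I)/I=0$ means $N\subseteq I$, so $N\subseteq I\subseteq R+I$ and therefore $(R+I)\cap N=N$, which is nonzero because $N\neq 0$ — contradicting the standing assumption $(R+I)\cap N=0$. This forces $(R+I)\cap N\neq 0$, as claimed. I expect the only genuinely non-routine point to be the structural input in the third paragraph: recognizing that the complementation hypothesis is present precisely to guarantee weak complementation of $J$, and hence a bounded approximate identity, from which the nilpotency of $J$ immediately collapses it to zero. The reduction to $B$ and the direct-sum verification are elementary, and the implication ``bounded approximate identity plus nilpotent $\Rightarrow$ zero'' is a one-line computation.
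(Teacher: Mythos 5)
Your proof is correct and takes essentially the same route as the paper's: argue by contradiction, pass to the amenable quotient $A/I$, verify the topological decomposition $A/I=(R+I)/I\oplus (N+I)/I$, and use Runde's Theorem 2.3.7 to conclude that the complemented closed nilpotent ideal $(N+I)/I$ vanishes, forcing $N\subseteq I$ and hence $(R+I)\cap N=N\neq 0$. The only cosmetic difference is that the paper deduces from that theorem that $N/I$ is \emph{amenable} and then tacitly uses that an amenable nilpotent algebra is zero, whereas you extract the bounded approximate identity directly and spell out the one-line collapse $J\subseteq\overline{J^{k}}=0$ — the same citation, made more explicit.
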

\begin{proof}
Suppose that $(R+I)\cap N = 0$. Clearly $R/I$ and $N/I$ are closed subspaces of $A/I$. Let $a\in A$. Since $A=R\oplus N$ as direct sum of closed subspaces, it follows that there are elements $r\in R$ and $n\in N$ such that $a+I= r+n+I$. Hence $A/I=R/I +N/I$. Let $r+I=n+I$, where $r\in R$ and $n\in N$. So there is an element $x\in I$ such that $r-n=x$. Thus $n=r-x\in (R+I)\cap N = 0$. So $R/I \cap N/I= I$. Therefore $A/I=R/I \oplus N/I$. Since $N/I$ is complemented in $A/I$ and $A/I$ is amenable, from \cite[Theorem 2.3.7]{runde} it follows that $N/I$ is amenable. On the other hand $N/I$ is a nilpotent ideal and hence $N/I=I$. So $0\neq N\subseteq I$, which is a contradiction. Hence $(R+I)\cap N \neq 0$.
\end{proof}
Let the Banach algebra $A$ is amenable relative to the closed ideal $I$. If $I$ is amenable, then $A$ is amenable, but the converse is not necessarily true. This is true for $C^*$-algebras. Connes and Haagerup proved that a $C^*$-algebra is amenable if and only if it is nuclear; so a $C^*$-algebras $A$ is amenable relative to a closed ideal $I$ if and only if $A/I$ is nuclear. By \cite[Theorem 6.5.3 and Page 216]{mur}, we have $A$ is nuclear if and only if $I$ and $A/I$ are both nuclear. Moreover, by \cite[Theorem 6.3.9]{mur} any finite-dimensional ideal in $A$ is nuclear and by \cite[Theorem 6.4.15]{mur} any commutative closed ideal in $A$ is nuclear. In view of these notes we have the following remark. 
\begin{remark}
Let $A$ be a $C^*$-algebra and $I$ be a closed ideal in $A$.
\begin{itemize}
\item[(i)] If $A$ is not amenable and $A$ is $I$-amenable, then $I$ is not finite-dimensional and is not commutative.
\item[(ii)] Let $A$ be $I$-amenable. Then $A$ is amenable if and only if $I$ is amenable. While this conclusion, in general, is not necessarily true for Banach algebras.
\end{itemize}
\end{remark}
We remind that every Banach algebra $A$ is amenable relative to trivial ideal $A$, and also every amenable Banach algebra is amenable relative to any closed ideal of it. The following example shows that it isn't necessary true that any Banach algebra $A$ has a non-trivial ideal $I\neq A$, such that $A$ is $I$-amenable.
\begin{example}\label{f2}
Let $\mathbb{F}_2$ be the free group on two generators, then the group $C^{*}$-algebra $C^*(\mathbb{F}_2)$ is a simple and non-amenable $C^*$-algebra (see \cite{bunce}). So
it hasn't a non-trivial ideal $I\neq C^*(\mathbb{F}_2)$ such that $C^*(\mathbb{F}_2)$ is $I$-amenable.
\end{example}
This notions motivated us to ask these questions: What kinds of non-amenable Banach algebras $A$ has a closed ideal $I\neq A$, such that $A$ is $I$-amenable? We answer this question for some cases, in the last section. Let $A$ be an $I$-amenable Banach algebra. For which types of closed ideals $J$ in $A$ in relation to the $I$, the Banach algebra $A$ is $J$-amenable? In the next results, we answer this question, in some cases.
\begin{theorem}\label{sub}
Let $A$ be a Banach algebra and $I$ be a closed ideal in $A$.
\begin{itemize}
\item[(i)] If $A$ is $I$-amenable and $J$ is an arbitrary closed ideal in $A$ such that $I\subseteq J$, then $A$ is $J$-amenable.
\item[(ii)] If $A$ is $I$-amenable and $J$ is an arbitrary closed ideal in $A$ such that $J\subseteq I$ and
$I$ is $J$-amenable, then $A$ is $J$-amenable.
\end{itemize}
\end{theorem}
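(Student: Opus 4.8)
The plan is to reduce both parts to the standard behaviour of amenability under quotients and extensions, using the third isomorphism theorem for Banach algebras. Recall that $A$ is $J$-amenable exactly when $A/J$ is amenable, so in each part the goal is to exhibit $A/J$ as either a quotient or an extension of algebras already known to be amenable.

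For part (i), I would first note that since $I\subseteq J$ are both closed ideals of $A$, the image $J/I$ is a closed ideal of $A/I$, and the third isomorphism theorem gives an (isometric) isomorphism $(A/I)/(J/I)\cong A/J$. By hypothesis $A/I$ is amenable, and since quotients of amenable Banach algebras by closed ideals are again amenable \cite[Corollary 2.3.2]{runde}, it follows that $(A/I)/(J/I)$, and hence $A/J$, is amenable. Thus $A$ is $J$-amenable.

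For part (ii), I would set $B=A/J$ and consider the closed ideal $K=I/J$ of $B$, which makes sense because $J\subseteq I$. The third isomorphism theorem again yields $(A/J)/(I/J)\cong A/I$. Now $K=I/J$ is amenable because $I$ is $J$-amenable, and $B/K\cong A/I$ is amenable because $A$ is $I$-amenable. Applying the extension result \cite[Theorem 2.3.10]{runde} --- which asserts that if a closed ideal and the corresponding quotient are both amenable, then the whole algebra is amenable --- I conclude that $B=A/J$ is amenable, that is, $A$ is $J$-amenable.

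There is no genuine obstacle here: once the correct closed ideals $J/I$ (in part (i)) and $I/J$ (in part (ii)) of the relevant quotient algebras are identified, both statements are immediate from the cited results. The only point requiring a little care is verifying that these images really are closed ideals of the respective quotients and that the third isomorphism theorem applies within the Banach-algebra category; this is routine, since a quotient of a Banach algebra by a closed ideal is again a Banach algebra and the maps involved are continuous algebra homomorphisms.
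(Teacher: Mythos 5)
Your proposal is correct and follows essentially the same route as the paper: both parts reduce to the third isomorphism theorem $(A/I)/(J/I)\cong A/J$ (resp.\ $(A/J)/(I/J)\cong A/I$) together with \cite[Corollary 2.3.2]{runde} for quotients and \cite[Theorem 2.3.10]{runde} for extensions. The only cosmetic difference is that the paper verifies the isomorphism is a topological one explicitly via a norm estimate and the inverse mapping theorem, whereas you assert isometry outright --- which is in fact true and harmless either way.
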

\begin{proof}
$(i)$ Since $I$ and $J$ are closed ideals in $A$, it follows that $J / I$ is a closed ideal in Banach algebra $A / I$. By isomorphism theorems, the linear map $\theta : A / J \rightarrow (A / I)/(J / I)$ defined by $\theta (a+ J)=(a+I)+J/I$ ($a\in A$) is an algebraic isomorphism. For any $a\in A$, we have 
\begin{equation*}
\begin{split}
\Vert \theta (a+I) \Vert & = \Vert (a+I)+ J/I \Vert  \\&
= inf \lbrace \Vert (a+I)+x+I \Vert \, : \, x\in J\rbrace \\ &
=inf \lbrace \Vert a+x+I \Vert \, : \, x\in J\rbrace \\ &
\leq inf \lbrace \Vert a+x \Vert \, : \, x\in J\rbrace \\  &
= \Vert a+J \Vert. 
\end{split}
\end{equation*}
So $\theta$ is a bounded isomorphism. By inverse mapping theorem $A / J $ and $(A / I)/(J / I)$ are isomorph as Banach algebras. Since $A / I$ is amenable and $J / I$ is a closed ideal of it, from \cite[Corollary 2.3.2]{runde}, it follows that $A/J$ is amenable.
\par 
$(ii)$ By a similar method as before, it is proved that $A / I $ and $(A / J)/(I / J)$ are isomorph as Banach algebras. By hypothesis, it follows that $(A / J)/(I / J)$ and $I/J$ are amenable. Thus $A/ J$ is amenable, by \cite[theorem 2.3.10]{runde}.
\end{proof}
According to Theorem~\ref{sub}-$(i)$, It's seen that if $A$ is $I$-amenable and $J$ is a closed ideal in $A$, then $A$ is $\overline{I+J}$-amenable.
\begin{theorem}
Let $A$ be a unital Banach algebra and $I_{1},\cdots , I_{n}$ be proper distinct closed ideals such that $I_{i}+I_{j}=A$ when $i\neq j$. If $A$ is $I_{i}$-amenable for any $1\leq i \leq n$, then $A$ is $\bigcap _{i=1}^{n}I_{i}$-amenable.
\end{theorem}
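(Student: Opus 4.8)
The plan is to reduce the statement to the standard fact that a finite direct sum of amenable Banach algebras is amenable, by identifying $A/\bigcap_{i=1}^{n} I_i$ with $\bigoplus_{i=1}^{n} A/I_i$ via a Chinese Remainder argument. To this end I would consider the algebra homomorphism $\Phi \colon A \to \bigoplus_{i=1}^{n} A/I_i$ given by $\Phi(a) = (a+I_1,\dots,a+I_n)$. Being coordinatewise a continuous quotient map, $\Phi$ is continuous, and its kernel is exactly $\bigcap_{i=1}^{n} I_i$, which is a closed ideal; hence $\Phi$ induces a continuous injective homomorphism $\overline{\Phi} \colon A/\bigcap_{i=1}^{n} I_i \to \bigoplus_{i=1}^{n} A/I_i$.

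The heart of the argument is the surjectivity of $\Phi$. Fixing $k$, for each $j \neq k$ I would use $I_k + I_j = A$ together with $1 \in A$ to write $1 = x_j + y_j$ with $x_j \in I_k$ and $y_j \in I_j$, and then set $e_k = \prod_{j \neq k} y_j$. Since each $I_m$ is a two-sided ideal, the factor $y_m \in I_m$ forces $e_k \in I_m$ for every $m \neq k$, while reducing modulo $I_k$ each factor $y_j = 1 - x_j$ is congruent to $1$, so that $e_k \equiv 1 \pmod{I_k}$. Thus $\Phi(e_k)$ is the idempotent of $\bigoplus_{i=1}^{n} A/I_i$ equal to $1+I_k$ in the $k$-th coordinate and $0$ elsewhere, and for any target $(a_1+I_1,\dots,a_n+I_n)$ the element $\sum_{k=1}^{n} a_k e_k$ is a preimage; hence $\Phi$ is onto. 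Consequently $\overline{\Phi}$ is a continuous algebra isomorphism between the Banach algebras $A/\bigcap_{i=1}^{n} I_i$ and $\bigoplus_{i=1}^{n} A/I_i$, and by the open mapping theorem it is a topological isomorphism.

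Finally, each $A/I_i$ is amenable because $A$ is $I_i$-amenable, and a finite direct sum of amenable Banach algebras is amenable: realizing $\bigoplus_{i=1}^{n} A/I_i$ as an extension of $A/I_n$ by the closed ideal $\bigoplus_{i=1}^{n-1} A/I_i$, with quotient isomorphic to $A/I_n$, an induction on $n$ using \cite[Theorem 2.3.10]{runde} yields amenability of the whole direct sum. Transporting this through $\overline{\Phi}$ gives amenability of $A/\bigcap_{i=1}^{n} I_i$, that is, $A$ is $\bigcap_{i=1}^{n} I_i$-amenable.

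The main obstacle I anticipate is the surjectivity step: one must check that the classical Chinese Remainder construction survives for two-sided ideals in a possibly noncommutative unital algebra, which is precisely what the choice $e_k = \prod_{j\neq k} y_j$ together with the two-sidedness of the $I_m$ secures, and then that the resulting purely algebraic isomorphism is automatically bicontinuous, for which the completeness of both sides and the open mapping theorem are exactly the tools required.
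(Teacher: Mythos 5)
Your proposal is correct and takes essentially the same route as the paper's proof: both identify $A/\bigcap_{i=1}^{n} I_{i}$ with the direct sum $\bigoplus_{i=1}^{n} A/I_{i}$ via the Chinese Remainder Theorem, upgrade the algebraic isomorphism to a topological one using continuity together with the open (inverse) mapping theorem, and conclude from the amenability of a finite direct sum of amenable Banach algebras. The only difference is that you spell out two steps the paper merely cites, namely the noncommutative CRT surjectivity via the elements $e_{k}=\prod_{j\neq k} y_{j}$ and the direct-sum amenability by induction with \cite[Theorem 2.3.10]{runde} rather than the direct reference to \cite[page 55]{runde}.
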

\begin{proof}
Define $\theta : A / \bigcap _{i=1}^{n}I_{i} \rightarrow \bigoplus _{i=1}^{n}A/I_{i}$ by $\theta(a+  \bigcap _{i=1}^{n}I_{i} )= (a+I_{1},\cdots , a+I_{n} )$ ($a\in A$). By Chinese remainder theorem, $\theta$ is an algebraic isomorphism. Consider $\bigoplus _{i=1}^{n}A/I_{i}$ as $l_{\infty}$-direct sum of Banach algebras. For any $a\in A$, we have 
\begin{equation*}
\begin{split}
\Vert \theta(a+  \bigcap _{i=1}^{n}I_{i} )\Vert & = \Vert (a+I_{1},\cdots , a+I_{n} ) \Vert _{\infty}  \\&
= max \lbrace \Vert a_{1}+I \Vert , \cdots ,\Vert a_{n}+I \Vert \rbrace \\ &
\leq  \Vert a\Vert.
\end{split}
\end{equation*}
So $\theta$ is a bounded isomorphism. By inverse mapping theorem $A / \bigcap _{i=1}^{n}I_{i}$ and $\bigoplus _{i=1}^{n}A/I_{i}$ are isomorph as Banach algebras. Since each $A / I_{i}$ ($1\leq i \leq n$) is amenable, by \cite[page 55]{runde} it follows that $l_{\infty}$-direct sum $\bigoplus _{i=1}^{n}A/I_{i}$ is amenable. Hence $A / \bigcap _{i=1}^{n}I_{i}$ is amenable. 
\end{proof}
Now, we assume that $A$ is an $I$-amenable Banach algebra, a
natural question is how much we can minimize the ideal $I$? According to this question, we introduce the following definition. 
\begin{definition}
 Let $A$ be a Banach algebra. We say that the closed ideal $I$ of $A$ is a \textit{minimal co-amenable ideal}, if $A$ is  amenable relative to $I$ and for any closed ideal $J$ of $A$ with $J \subseteq I$ such that $A / J$ is amenable, then $I=J$.
\end{definition}
\begin{example}
In the following, we give some preliminary examples of above definition:
\begin{itemize}
\item[(i)] For every amenable Banach algebra, $0$ is a  minimal co-amenable
ideal.
\item[(ii)] For every non-amenable simple Banach algebra $A$ (see Examplesee ~\ref{f2}), the ideal $A$ is a minimal co-amenable ideal.
\end{itemize}
\end{example}
We ask two following natural questions: What kinds of Banach algebras has a minimal co-amenable ideal? If a Banach algebras has minimal co-amenable ideals, is there any identification of them? 
\par 
In the next theorem we give a sufficient condition for a closed ideal to be minimal co-amenable ideal.
\begin{theorem}\label{mini}
Let $A$ be a Banach algebra and $N$ be a complemented nilpotent closed ideal in $A$. If $A$ is $N$-amenable, then $N$ is a minimal co-amenable ideal.
\end{theorem}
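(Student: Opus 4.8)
The plan is to verify the two requirements in the definition of a minimal co-amenable ideal. The first, that $A$ is amenable relative to $N$, is exactly the hypothesis. For the second, I would take an arbitrary closed ideal $J$ of $A$ with $J \subseteq N$ and $A/J$ amenable, and aim to conclude that $J = N$. The strategy is to examine the ideal $N/J$ sitting inside the amenable algebra $A/J$ and show that it must be trivial.

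First I would record that $N/J$ is a closed ideal of $A/J$ (since $J \subseteq N$ are closed ideals of $A$) and that it is nilpotent: if $N^k = 0$, then for any $n_1 + J, \dots, n_k + J \in N/J$ the product is $n_1 \cdots n_k + J = J$, because $n_1 \cdots n_k \in N^k = 0 \subseteq J$; hence $(N/J)^k = 0$.

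The key technical step is to transfer the complementation of $N$ in $A$ to complementation of $N/J$ in $A/J$. Let $P : A \to A$ be the bounded projection with range $N$ afforded by the hypothesis that $N$ is complemented. Since $P$ restricts to the identity on $N$ and $J \subseteq N$, we have $P(J) \subseteq J$, so $P$ drops to a well-defined bounded linear map $\overline{P} : A/J \to A/J$, $\overline{P}(a + J) = P(a) + J$, which is a projection onto $N/J$ (its range is $P(A)/J = N/J$ and it acts as the identity on $N/J$). A routine estimate gives $\|\overline{P}\| \leq \|P\|$, so $N/J$ is complemented in $A/J$. I expect this descent of the projection to the quotient to be the main point to get right, although it is short once one notices $P(J) \subseteq J$.

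Finally, since $A/J$ is amenable and $N/J$ is a complemented closed ideal of it, \cite[Theorem 2.3.7]{runde} shows that $N/J$ is amenable. But an amenable Banach algebra has a bounded approximate identity, hence equals the closure of its own square, and iterating this forces any nilpotent amenable Banach algebra to vanish (as was already used in the necessary-condition theorem above). Therefore $N/J = \{0\}$ in $A/J$, that is, $N \subseteq J$; combined with $J \subseteq N$ this yields $J = N$. Hence $N$ is a minimal co-amenable ideal.
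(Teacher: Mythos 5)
Your proposal is correct and follows essentially the same route as the paper: pass to the quotient $A/J$, show the nilpotent ideal $N/J$ is a complemented closed ideal there, invoke \cite[Theorem 2.3.7]{runde} to get its amenability, and conclude $N/J=0$ from nilpotency, forcing $J=N$. The only divergence is technical and slightly in your favor: where the paper exhibits the complement by proving $R+J$ is closed via a sequence argument in the decomposition $A=R\oplus N$, you descend the bounded projection $P$ onto $N$ to $A/J$ using $P(J)\subseteq J$, and you also spell out (via the bounded approximate identity and $\overline{B^2}=B$) why an amenable nilpotent algebra vanishes, a step the paper leaves implicit.
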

\begin{proof}
Assume that $I$ is a closed ideal of $A$ such that $A/I$ is amenable and $I\subseteq N$. Let $R$ be a complemented closed subspace of $N$. So $A=R\oplus N$ as direct sum of closed subspaces. Suppose that $\lbrace r_k \rbrace _{k=1}^{\infty}$ and $\lbrace x_{k}\rbrace _{k=1}^{\infty}$ are sequences in $R$ and $I$, respectively such that $r_k + x_k \rightarrow r+n$ where $r\in R$ and $n\in N$. Since $A=R\oplus N$ and $I\subseteq N$, it follows that $\Vert r_k -r \Vert + \Vert x_k - n \Vert \rightarrow 0$. So $r+n \in R+I$ and hence $R+I$ is closed. Therefore, $R/I$ is a closed subspace of $A/I$. Moreover, it is clear that $N/I$ is a closed subspace of $A/I$.  Let $a\in A$. There are elements $r\in R$ and $n\in N$ such that $a+I= r+n+I$. Hence $A/I=R/I +N/I$.  Let $r+I=n+I$, where $r\in R$ and $n\in N$. So there is an element $x\in I$ such that $r-n=x$. Since $I\subseteq N$, it follows that $r=n+x\in R \cap N=0$. So $R/I \cap N/I= I$. Therefore $A/I=R/I \oplus N/I$. Since $N/I$ is complemented in $A/I$ and $A/I$ is amenable, it follows that $N/I$ is amenable. On the other hand $N/I$ is a nilpotent ideal. So $ N= I$ and hence $N$ is a minimal co-amenable ideal.
\end{proof}
In Section 4 we give examples which satisfy the assumptions of the preceding theorem. Moreover, we answer the above questions for some kinds of Banach algebras. 
\section{Hereditary properties }
In this section we study some hereditary properties
of relative amenability.
\begin{theorem}\label{epi}
Let $A$ be an $I$-amenable Banach algebra and $B$ be a Banach
algebra. If $\phi:A\to B$ is a continuous epimorphism, then $B$ is $\overline{\phi(I)}$-amenable.
\end{theorem}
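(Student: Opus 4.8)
The plan is to push the quotient structure forward along $\phi$ and then invoke the hereditary behaviour of amenability under continuous homomorphisms with dense range. First I would record that, since $\phi$ is a continuous epimorphism, $\phi(I)$ is an ideal of $B$: for $b=\phi(a)\in B$ and $i\in I$ we have $b\cdot\phi(i)=\phi(ai)\in\phi(I)$ and similarly on the other side. By continuity of multiplication its closure $J:=\overline{\phi(I)}$ is then a closed ideal of $B$, so $B/J$ is a genuine Banach algebra and the statement ``$B$ is $\overline{\phi(I)}$-amenable'' is meaningful. Let $\p_A:A\to A/I$ and $\p_B:B\to B/J$ denote the two quotient maps.

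The main step is to produce a continuous epimorphism $A/I\to B/J$. I would consider the continuous homomorphism $\p_B\circ\phi:A\to B/J$. Because $\phi(I)\subseteq J$, we have $I\subseteq\ker(\p_B\circ\phi)$, so by the universal property of the Banach-space quotient (a continuous linear map annihilating $I$ factors through $A/I$ with no increase of norm) there is a continuous linear map $\psi:A/I\to B/J$ with $\psi\circ\p_A=\p_B\circ\phi$, and a routine check shows $\psi$ is multiplicative, i.e.\ a continuous homomorphism. Its image is $\psi(A/I)=\psi(\p_A(A))=\p_B(\phi(A))=\p_B(B)=B/J$, using that both $\phi$ and $\p_B$ are surjective; hence $\psi$ is itself a continuous epimorphism.

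To finish, I would apply the standard fact that a continuous homomorphic image (it suffices that the range be dense) of an amenable Banach algebra is amenable, see \cite[Proposition 2.3.1]{runde}. Since $A$ is $I$-amenable, $A/I$ is amenable; as $\psi:A/I\to B/J$ is a continuous surjective homomorphism, $B/J=\psi(A/I)$ is amenable, that is, $B$ is $\overline{\phi(I)}$-amenable, which is exactly the claim.

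I do not expect a serious obstacle here: the entire argument is the construction of the induced morphism $\psi$ together with one invocation of the hereditary result. The only points needing care are the verification that $\phi(I)$ is truly an ideal (which is where the surjectivity of $\phi$ is used, and without which $\overline{\phi(I)}$ need not even be an ideal) and the well-definedness and continuity of $\psi$, both of which are immediate from the universal property of quotients. The crux, such as it is, lies simply in identifying the correct factorisation $\psi\circ\p_A=\p_B\circ\phi$ and observing that $\psi$ inherits surjectivity from $\phi$ and $\p_B$.
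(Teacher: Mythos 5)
Your proposal is correct and follows essentially the same route as the paper: both construct the induced surjective homomorphism $\psi:A/I\to B/\overline{\phi(I)}$ satisfying $\psi\circ\p_A=\p_B\circ\phi$ and then apply \cite[Proposition 2.3.1]{runde} to conclude that $B/\overline{\phi(I)}$ is amenable. The only cosmetic difference is that you obtain continuity of $\psi$ directly from the universal property of the Banach-space quotient (and explicitly verify that $\overline{\phi(I)}$ is a closed ideal), whereas the paper deduces continuity from the factorization via \cite[Proposition 5.2.2]{dals}.
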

\begin{proof}
Define $\widetilde{\phi}:A/I\to B/\overline{\phi(I)}$ by $\widetilde{\phi}(a+I)=\phi(a)+\overline{\phi(I)}$. It is easily checked that $\widetilde{\phi}$ is well-defined and it is a surjective homomorphism. Let $\pi_{A}: A\rightarrow A/I$ and $\pi_{B}:B\rightarrow B/\overline{\phi(I)}$ be quotient maps. So $\widetilde{\phi}\pi_{A}=\pi_{B}\phi$. Hence $\widetilde{\phi}\pi_{A}$ is continuous and from \cite[Proposition 5.2.2]{dals}-$(i)$, it follows that $\widetilde{\phi}$ is continuous. Since $A/I$ is amenable, by \cite[Proposition 2.3.1]{runde} we see that $B/\overline{\phi(I)}$ is amenable.
\end{proof}
\begin{remark}\label{ker}
Let $\phi:A\to B$ be a continuous epimorphism. It is clear that $Ker(\phi)$ is a closed ideal of $A$. By using a similar method as the proof of Theorem~\ref{epi} and inverse mapping theorem, we can see $A/Ker(\phi)\cong B$ as isomorphism of Banach algebras. So $A$ is $Ker(\phi)$-amenable if and only if $B$ is amenable. 
\end{remark}
\begin{theorem}\label{c0}
Let $\{A_{\lambda}\}_{\lambda\in\Lambda}$ be a collection of $\{I_{\lambda}\}_{\lambda\in\Lambda}$-amenable Banach algebras, where each $I_\lambda$ is a closed ideal in $A_\lambda$ ($\lambda\in\Lambda$). Then $\oplus_{\lambda}^{c_0}{A_{\lambda}}$ is ${\oplus_{\lambda}^{c_0} I_{\lambda}}$-amenable.
\end{theorem}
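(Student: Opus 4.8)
The plan is to strip the statement down to its definitional content and then reduce it to a single structural fact about $c_0$-direct sums. By the definition of relative amenability, proving that $\oplus_{\lambda}^{c_0}A_{\lambda}$ is $\oplus_{\lambda}^{c_0}I_{\lambda}$-amenable means proving that the quotient Banach algebra $\left(\oplus_{\lambda}^{c_0}A_{\lambda}\right)\big/\left(\oplus_{\lambda}^{c_0}I_{\lambda}\right)$ is amenable, knowing only that each $A_{\lambda}/I_{\lambda}$ is amenable. So the first step is to identify this quotient, up to (topological) isomorphism of Banach algebras, with the $c_0$-direct sum $\oplus_{\lambda}^{c_0}(A_{\lambda}/I_{\lambda})$ of the amenable quotient algebras.

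To carry this out I would consider the coordinatewise map $\Phi:\oplus_{\lambda}^{c_0}A_{\lambda}\to\oplus_{\lambda}^{c_0}(A_{\lambda}/I_{\lambda})$ given by $\Phi((a_{\lambda})_{\lambda})=(a_{\lambda}+I_{\lambda})_{\lambda}$. Since $\Vert a_{\lambda}+I_{\lambda}\Vert\leq\Vert a_{\lambda}\Vert\to 0$, the image lies in the $c_0$-sum, and $\Phi$ is a contractive algebra homomorphism. It is onto: given $(b_{\lambda}+I_{\lambda})_{\lambda}$ with $\Vert b_{\lambda}+I_{\lambda}\Vert\to 0$, choose representatives $a_{\lambda}$ with $\Vert a_{\lambda}\Vert\leq 2\Vert b_{\lambda}+I_{\lambda}\Vert$ (and $a_{\lambda}=0$ when the coset vanishes), so that $(a_{\lambda})_{\lambda}\in\oplus_{\lambda}^{c_0}A_{\lambda}$ maps onto the given element. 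Clearly $\ker\Phi=\oplus_{\lambda}^{c_0}I_{\lambda}$. The open mapping theorem then yields a topological isomorphism $\left(\oplus_{\lambda}^{c_0}A_{\lambda}\right)\big/\left(\oplus_{\lambda}^{c_0}I_{\lambda}\right)\cong\oplus_{\lambda}^{c_0}(A_{\lambda}/I_{\lambda})$; a short argument using that $(a_{\lambda})_{\lambda}$ is null (so all but finitely many coordinates can be left untouched) shows the isomorphism is in fact isometric. As amenability is preserved under topological algebra isomorphisms, the theorem reduces to the assertion that a $c_0$-direct sum of amenable Banach algebras is amenable.

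Write $B_{\lambda}:=A_{\lambda}/I_{\lambda}$ and $B:=\oplus_{\lambda}^{c_0}B_{\lambda}$. For each finite $F\subseteq\Lambda$ the finite product $\oplus_{\lambda\in F}B_{\lambda}$ is amenable, since a finite product of amenable algebras is amenable, and hence carries a virtual diagonal. The natural strategy is to assemble these partial diagonals into an approximate diagonal for $B$, indexed by the net of pairs (finite set $F$, accuracy on $\oplus_{\lambda\in F}B_{\lambda}$), and then pass to a weak$^{*}$ cluster point in $(B\widehat{\otimes}B)^{**}$ to obtain a virtual diagonal for $B$, which would establish amenability.

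The main obstacle is precisely the boundedness of this net. The norms of the assembled diagonals are governed by the amenability constants $\mathrm{AM}(B_{\lambda})$, so the construction produces a bounded net, and thus a genuine virtual diagonal, only when $\sup_{\lambda}\mathrm{AM}(B_{\lambda})<\infty$; controlling these constants uniformly is where I expect the real work (or an additional hypothesis) to be needed. That such uniform control is genuinely the crux can be seen from the converse direction: each coordinate projection $\pi_{\mu}:B\to B_{\mu}$ is a contractive epimorphism, so by the argument of Theorem~\ref{epi} (via \cite[Proposition 2.3.1]{runde}) amenability of $B$ would force $\mathrm{AM}(B_{\mu})\leq\mathrm{AM}(B)$ for every $\mu$. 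Under a uniform bound on the $\mathrm{AM}(B_{\lambda})$ the weak$^{*}$-limit argument goes through, $B$ is amenable, and therefore $\oplus_{\lambda}^{c_0}A_{\lambda}$ is $\oplus_{\lambda}^{c_0}I_{\lambda}$-amenable.
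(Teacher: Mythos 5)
Your first step coincides with the paper's proof in substance. The paper defines the same coordinatewise map (in the quotient form $\theta\colon \oplus_{\lambda}^{c_0}A_{\lambda}/\oplus_{\lambda}^{c_0}I_{\lambda}\to \oplus_{\lambda}^{c_0}(A_{\lambda}/I_{\lambda})$), verifies it is an algebraic isomorphism via the isomorphism theorems, gets continuity from \cite[Proposition 5.2.2]{dals} applied to $\theta\pi$, and invokes the inverse mapping theorem; your direct argument with coset representatives $\Vert a_{\lambda}\Vert\leq 2\Vert b_{\lambda}+I_{\lambda}\Vert$ is equivalent and your remark that the identification is actually isometric is correct (the paper settles for a topological isomorphism, which suffices). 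The divergence is in the second step, where the paper disposes of everything in one line by citing \cite[Corollary 2.3.19]{runde} for the assertion that a $c_0$-direct sum of amenable Banach algebras is amenable, while you try to prove that assertion and get stuck at the boundedness of the net of assembled partial diagonals. As a proof of the theorem \emph{as stated}, your proposal therefore has a genuine gap: you only establish the conclusion under the additional hypothesis $\sup_{\lambda}\mathrm{AM}(A_{\lambda}/I_{\lambda})<\infty$, which is not among the hypotheses of Theorem~\ref{c0}.

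That said, your diagnosis of the obstruction is accurate, and the gap is not really yours: the uniform bound on the amenability constants is exactly the hypothesis carried by the $c_0$-sum result in \cite{runde}, and the paper's citation silently drops it. Moreover, your converse observation shows the hypothesis cannot be removed: each coordinate projection $\pi_{\mu}\colon B\to B_{\mu}$ is a contractive epimorphism, so amenability of $B=\oplus_{\lambda}^{c_0}B_{\lambda}$ forces $\sup_{\lambda}\mathrm{AM}(B_{\lambda})\leq \mathrm{AM}(B)<\infty$; since there exist families of amenable Banach algebras with unbounded amenability constants (for instance Fourier algebras of suitable finite groups, by B.~E.~Johnson's computations in his 1994 work on non-amenability of $A(G)$ for compact $G$), taking $I_{\lambda}=0$ for such a family gives a counterexample to the unqualified statement. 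So the theorem is false in this generality; it becomes true either for finite $\Lambda$ (which is all that the paper's subsequent $\ell^{\infty}$-corollary needs, and which follows from finitely many applications of \cite[Theorem 2.3.10]{runde}) or after adding $\sup_{\lambda}\mathrm{AM}(A_{\lambda}/I_{\lambda})<\infty$ to the hypotheses, under which your finite-stage diagonal construction (with the standard sign-averaging trick to keep the norm of $\sum_{\lambda\in F}m_{\lambda}$ controlled by $\sup_{\lambda}$ rather than $\sum_{\lambda}$ of the individual bounds) and the weak$^{*}$ cluster point argument do go through. In short: your proof is incomplete relative to the stated claim, but the incompleteness faithfully exposes a real defect in the paper's own one-line proof.
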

\begin{proof}
It is obvious that $\oplus_{\lambda}^{c_0} I_{\lambda}$ is a closed ideal in $\oplus_{\lambda}^{c_0}A_{\lambda}$. Define $\theta : \oplus_{\lambda}^{c_0}A_{\lambda} /  \oplus_{\lambda}^{c_0} I_{\lambda}\rightarrow \oplus_{\lambda}^{c_0}A_{\lambda} / I_{\lambda} $ by $\theta ((a_{\lambda})+\oplus_{\lambda}^{c_0} I_{\lambda})=(a_{\lambda}+I_{\lambda})$ ($(a_{\lambda})\in \oplus_{\lambda}^{c_0}A_{\lambda}$). $\theta$ is a well-defined and by isomorphism theorems, it is easily checked that $\theta$ is an algebraic isomorphism. The linear map $\theta \pi$ is continuous, where $\pi :\oplus_{\lambda}^{c_0}A_{\lambda} \rightarrow \oplus_{\lambda}^{c_0}A_{\lambda} /  \oplus_{\lambda}^{c_0} I_{\lambda}$ is the quotient map. From \cite[Proposition 5.2.2]{dals}-$(i)$, it follows that $\theta$ is continuous. By inverse mapping theorem $\oplus_{\lambda}^{c_0}A_{\lambda} /  \oplus_{\lambda}^{c_0} I_{\lambda}$ and $\oplus_{\lambda}^{c_0}A_{\lambda} / I_{\lambda}$ are isomorph as Banach algebras. Since each $A_{\lambda}/ I_{\lambda}$ are amenable, $\oplus_{\lambda}^{c_0}A_{\lambda} / I_{\lambda}$ is amenable again (see \cite[Corollary 2.3.19]{runde}). This concludes that $\oplus_{\lambda}^{c_0}A_{\lambda}$ is $\oplus_{\lambda}^{c_0} I_{\lambda}$-amenable.
\end{proof}
The following result is immediate from Theorem~\ref{c0}
\begin{corollary}
Let  $A_{1},\cdots , A_{n}$ be $I_1 , \cdots . I_n$-amenable Banach algebras, respectively. Then $A_{1}\oplus_{\infty}\cdots \oplus_{\infty} A_n$ is $I_{1}\oplus_{\infty}\cdots \oplus_{\infty} I_n$-amenable.
\end{corollary}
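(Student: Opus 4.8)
The plan is to obtain this as a direct specialization of Theorem~\ref{c0} to a finite index set, so the only real work is the bookkeeping that identifies the two flavors of direct sum. First I would set $\Lambda=\{1,\dots,n\}$ and regard $A_{1},\dots,A_{n}$ as the family $\{A_{\lambda}\}_{\lambda\in\Lambda}$, with the closed ideals $I_{1},\dots,I_{n}$ playing the role of $\{I_{\lambda}\}_{\lambda\in\Lambda}$. By hypothesis each $A_{i}$ is $I_{i}$-amenable, so the family satisfies exactly the hypotheses of Theorem~\ref{c0}.

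The key observation is that over a finite index set the $c_0$-direct sum and the $\ell_{\infty}$-direct sum coincide, both as algebras and isometrically. Indeed, every $n$-tuple is trivially a sequence that vanishes at infinity, so $\oplus_{\lambda}^{c_0}A_{\lambda}$ and $A_{1}\oplus_{\infty}\cdots\oplus_{\infty}A_{n}$ have the same underlying set and the same coordinatewise algebra operations; moreover the $c_0$-norm (the supremum of the coordinate norms) reduces to the maximum over the $n$ coordinates, which is precisely the $\ell_{\infty}$-norm. The same identification applies to the ideals, giving $\oplus_{\lambda}^{c_0}I_{\lambda}=I_{1}\oplus_{\infty}\cdots\oplus_{\infty}I_{n}$ as closed ideals.

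Having made these identifications, I would simply invoke Theorem~\ref{c0}, which yields that $\oplus_{\lambda}^{c_0}A_{\lambda}$ is $\oplus_{\lambda}^{c_0}I_{\lambda}$-amenable; transporting this through the isometric algebra isomorphisms above gives that $A_{1}\oplus_{\infty}\cdots\oplus_{\infty}A_{n}$ is $I_{1}\oplus_{\infty}\cdots\oplus_{\infty}I_{n}$-amenable, as claimed. There is no genuine obstacle here: the statement is labeled immediate precisely because the finite case is a transparent instance of the general theorem, and the only point requiring a word of justification is the coincidence of the $c_0$- and $\ell_{\infty}$-norms on finitely many summands.
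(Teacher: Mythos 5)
Your proposal is correct and follows the paper's own route exactly: the paper likewise derives the corollary as an immediate specialization of Theorem~\ref{c0}, and your only added content---the observation that over a finite index set the $c_0$-direct sum coincides isometrically and algebraically with the $\ell_\infty$-direct sum, for the ideals as well as the algebras---is precisely the bookkeeping the paper leaves implicit in the word ``immediate.'' No gap; nothing further is needed.
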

\begin{remark}
Let $A$ be a Banach algebra and $I$ be a closed ideal in $A$.
\begin{itemize}
\item[(i)] Let $A^\sharp$ be the unitization of $A$. Then $I$ is a closed ideal in $A^\sharp$ and it is easily checked that $(A/I)^{\sharp}\cong A^\sharp /I$. From  \cite[Corollary 2.3.11]{runde}, it follows that $A$ is $I$-amenable if and only if $A^\sharp$ is $I$-amenable.
\item[(ii)] Let $A$ be an $I$-amenable Banach algebra, and $J$ be a closed ideal in $A$ with $I\subseteq J$. If $J$ has a bounded approximate identity, then $J$ is $I$-amenable. Because $J / I$ is a closed ideal in $A/ I$ with a bounded approximate identity ($J$ has a bounded approximate identity) and from \cite[ Proposition 2.3.3]{runde}, $J/I$ is amenable.
\item[(iii)] Let $(A^{**},\square)$ be the second dual of the Banach algebra $A$ which is equipped with the first Arens product. In the case where $I$ is a closed ideal in $A$, we have the identification (\cite[Page 250]{dals})
\[((A/I)^{**},\square )\cong (A^{**},\square)/I^{**}. \]
So if $(A^{**},\square)$ is $I^{**}$-amenable, then $((A/I)^{**},\square )$ is amenable and hence by \cite{gor}, $A$ is $I$-amenable.
\end{itemize}
\end{remark}
\section{Relative amenability of special Banach algebras}
In this section, we give various types of relative amenable Banach algebras and we answer some of the questions in Section 2.
\subsection*{Triangular Banach algebras}
From this point up to the last subsection $A$ and $B$ are Banach algebras, $M$ is a Banach $(A,B)$-bimodule. Also $Tri(A,M,B)$ denotes the triangular Banach algebra $\begin{pmatrix}
  A & M \\
  0 & B
\end{pmatrix}$ as defined in the Introduction section with $l^1$-norm. Let $C\subseteq A$, $D\subseteq B$ and $E\subseteq M$, then $Tri(C,E,D)$ denotes the subset 
\[ Tri(C,E,D)=\bigg\lbrace \begin{pmatrix}
  x & y \\
  0 & z
\end{pmatrix} \, : \, x\in C, y\in E, z\in D \bigg\rbrace \]
of $Tri(A,M,B)$.
\begin{remark}\label{ideal}
Let $\ti =Tri(A,M,B)$, $I$ be a closed ideal of $A$ and $J$ be a closed ideal of $B$. Then it is easily checked that $\kk = Tri(I,M,J)$ is a closed ideal of $\ti$ and $\ti / \kk $ and $ A/I \oplus B/J$ are isometrically isomorphic, where $ A/I \oplus B/J$ is the $l^1$-direct sum of Banach algebras. So $\ti$ is $\kk$-amenable if and only if $A$ is $I$-amenable and $B$ is $J$-amenable. Especially, we have the followings:
\begin{itemize}
\item[(i)] $\ti$ is $Tri(0,M,B)$-amenable if and only if $A$ is amenable.
\item[(ii)] $\ti$ is $Tri(A,M,0)$-amenable if and only if $B$ is amenable.
\item[(iii)] $\ti$ is $Tri(0,M,0)$-amenable if and only if $A$ and $B$ are amenable.
\end{itemize}
\end{remark}
By Johnson's theorem \cite{j} (a locally compact group $G$ is amenable if and only if $L^{1}(G)$ is an amenable Banach algebra) and Remark~\ref{ideal}, the following example is immediate.
\begin{example}
Let $G$ be a locally compact group and $M$ be a Banach $L^1(G)$-bimodule. Let $\I$ be the any of the closed ideals $Tri(0, M, L^1(G))$, $Tri(L^1(G), M, 0)$ or $Tri(0, M, 0)$ of triangular Banach algebra $\ti =Tri(L^1(G), M, L^1(G))$. Then $\ti$ is $\I$-amenable if and only if $G$ is an amenable group.
\end{example}
In the next result we give a necessary condition for a closed ideal $Tri(I,M,J)$ to be minimal co-amenable ideal.
\begin{proposition}\label{mm}
Let $\ti =Tri(A,M,B)$, $I$ be a closed ideal of $A$ and $J$ be a closed ideal of $B$. If $\kk=Tri(I,M,J)$ is a  minimal co-amenable ideal of $\ti$, then $I$ and $J$ are minimal co-amenable ideals of $A$ and $B$, respectively. 
\end{proposition}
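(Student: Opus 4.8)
The plan is to reduce everything to Remark~\ref{ideal}, which already packages the relationship between relative amenability of $\ti$ with respect to ideals of the form $Tri(\cdot,M,\cdot)$ and relative amenability of the corner algebras $A$ and $B$. First I would record that, since $\kk$ is a minimal co-amenable ideal, in particular $\ti$ is $\kk$-amenable; hence by Remark~\ref{ideal} the algebra $A$ is $I$-amenable and $B$ is $J$-amenable. This establishes the first clause in the definition of a minimal co-amenable ideal for both $I$ and $J$ simultaneously.

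It remains to verify minimality, and by the obvious symmetry between the two corners it suffices to treat $I$. So I would suppose that $I'$ is a closed ideal of $A$ with $I'\subseteq I$ and $A/I'$ amenable (that is, $A$ is $I'$-amenable), aiming to force $I'=I$. The key construction is to set $\kk'=Tri(I',M,J)$. By Remark~\ref{ideal} (applied with $I'$ in place of $I$) this is a closed ideal of $\ti$, and since $I'\subseteq I$ we have $\kk'\subseteq \kk$. Moreover, because $A$ is $I'$-amenable and $B$ is $J$-amenable, Remark~\ref{ideal} shows that $\ti$ is $\kk'$-amenable.

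Now I would invoke the minimality of $\kk$: we have produced a closed ideal $\kk'$ of $\ti$ with $\kk'\subseteq \kk$ and $\ti/\kk'$ amenable, so the definition of a minimal co-amenable ideal forces $\kk'=\kk$, i.e. $Tri(I',M,J)=Tri(I,M,J)$. Comparing the top-left entries of these two sets immediately yields $I'=I$, which is precisely the minimality of $I$. Running the same argument with an ideal of the form $Tri(I,M,J')$, for a closed ideal $J'\subseteq J$ of $B$ with $B/J'$ amenable, gives the minimality of $J$.

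The whole argument is essentially a transport of the minimality condition through the isometric isomorphism $\ti/\kk\cong A/I\oplus B/J$ supplied by Remark~\ref{ideal}, so I do not expect any serious analytic obstacle. The only point requiring care is the bookkeeping that $Tri(I',M,J)$ is a genuine closed ideal sitting inside $\kk$ and that equality of two such triangular sets is equivalent to equality of their corner ideals; both are immediate from Remark~\ref{ideal} and from the definition of the $Tri(\cdot,\cdot,\cdot)$ notation.
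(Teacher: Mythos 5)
Your proof is correct and follows essentially the same route as the paper: both establish $I$- and $J$-amenability via Remark~\ref{ideal} and then test the minimality of $\kk$ against a triangular ideal built from a smaller corner ideal. The only cosmetic difference is that you treat the two corners separately with $Tri(I',M,J)$ and $Tri(I,M,J')$, while the paper handles both at once with a single ideal $Tri(I',M,J')$; the underlying argument is identical.
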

\begin{proof}
By Remark~\ref{ideal}, $A/I$ and $B/J$ are amenable. Let $I^\prime \subseteq I$ and $J^\prime \subseteq J$ be closed ideals such that $A/I^\prime$ and $B/J^\prime$ are amenable. By Remark~\ref{ideal}, $\kk^\prime =Tri(I^\prime,M,J^\prime)$ is a closed ideal of $\ti$ such that $\ti$ is $ \kk^\prime$-amenable. Also $\kk^\prime \subseteq \kk$ and $\kk$ is a minimal co-amenable ideal. So $\kk=\kk^\prime$ and hence $I=I^\prime$ and $J=J^\prime$. Therefore, $I$ and $J$ are minimal co-amenable ideals of $A$ and $B$, respectively. 
\end{proof}
The following proposition provides some examples which satisfy the assumptions of the Theorem~\ref{mini}. Indeed, we find a minimal co-amenable ideal in some triangular Banach algebras.
\begin{proposition}\label{mm2}
Let $A$ and $B$ be amenable Banach algebras. Then the closed ideal $Tri(0,M,0)$ is a minimal co-amenable ideal in the $Tri(A,M,B)$.
\end{proposition}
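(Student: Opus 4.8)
The plan is to realize $Tri(0,M,0)$ as a complemented nilpotent closed ideal and then apply Theorem~\ref{mini} verbatim. Write $\ti=Tri(A,M,B)$ and $\kk=Tri(0,M,0)$. First I would record that $\ti$ is $\kk$-amenable: by Remark~\ref{ideal} applied to the zero ideals of $A$ and $B$, $\kk$ is a closed ideal of $\ti$ and $\ti/\kk\cong A\oplus B$; since $A$ and $B$ are amenable, the $l^1$-direct sum $A\oplus B$ is amenable (as already used in Remark~\ref{ideal}), so $\ti/\kk$ is amenable. This secures the co-amenability half of the definition of a minimal co-amenable ideal.

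Next I would verify the two structural hypotheses of Theorem~\ref{mini}. For nilpotency, a direct matrix computation gives $\kk^2=Tri(0,M,0)\,Tri(0,M,0)=0$, so $\kk$ is nilpotent with $\kk^2=0$. For complementedness, take $R=Tri(A,0,B)$; every element of $\ti$ splits uniquely as $\begin{pmatrix} a & 0 \\ 0 & b \end{pmatrix}+\begin{pmatrix} 0 & m \\ 0 & 0 \end{pmatrix}$, and under the $l^1$-norm this exhibits $\ti=R\oplus\kk$ as a topological direct sum of closed subspaces, so $\kk$ is complemented in $\ti$.

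With these facts in place, Theorem~\ref{mini}, applied with the ambient algebra $\ti$ and the ideal $\kk$, yields at once that $\kk=Tri(0,M,0)$ is a minimal co-amenable ideal of $\ti$, which is the assertion. I do not anticipate a genuine obstacle, since the proposition is essentially a specialization of Theorem~\ref{mini}; the only point deserving explicit mention is that the complement $R=Tri(A,0,B)$ really does give a topological direct sum, which is immediate from the $l^1$-norm but must be stated so that the ``complemented'' hypothesis of Theorem~\ref{mini} is literally satisfied.
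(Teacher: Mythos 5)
Your proposal is correct and follows essentially the same route as the paper: invoke Remark~\ref{ideal} to get that $Tri(A,M,B)$ is $Tri(0,M,0)$-amenable, observe that $Tri(0,M,0)^2=0$ and that $Tri(A,0,B)$ is a closed complement, and then apply Theorem~\ref{mini}. Your only addition is to spell out explicitly why the $l^1$-norm makes $Tri(A,M,B)=Tri(A,0,B)\oplus Tri(0,M,0)$ a topological direct sum, a point the paper states as clear; this is harmless and, if anything, slightly more careful.
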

\begin{proof}
By Remark~\ref{ideal}, $Tri(A,M,B)$ is amenable relative to $Tri(0,M,0)$. It is clear that $Tri(0,M,0)$ is complemented in $Tri(A,M,B)$ (with complemented closed subspace $Tri(A,0,B)$) and $Tri(0,M,0)^2=0$. So by Theorem~\ref{mini}, $Tri(0,M,0)$ is a minimal co-amenable ideal.
\end{proof}
If $A$ and $B$ are unital Banach algebras with unities $1_A$ and $1_B$, respectively and $M$ is a unital Banach $(A,B)$-bimodule, then the triangular Banach algebra $Tri(A,M,B)$ is unital with the unity $\textbf{1}=\begin{pmatrix}
  1_A & 0 \\
  0 & 1_B
\end{pmatrix}$. In continue we characterize all of the closed ideals $\lll$ in a unital triangular Banach algebra $\ti=Tri(A,M,B)$ such that $\ti$ is $\lll$-amenable. Moreover, in this case we describe all of the minimal co-amenable ideals in $\ti$. 
\begin{theorem}\label{uni}
Let $\ti=Tri(A,M,B)$ be a unital triangular Banach algebra and $\lll$ be a closed ideal of it. Then
\begin{itemize}
\item[(i)] $\ti$ is $\lll$-amenable if and only if there are closed ideals $I$ and $J$ of $A$ and $B$, respectively such that $\lll=Tri(I,M,J)$, $A$ is $I$-amenable and $B$ is $J$-amenable;
\item[(ii)] $\lll$ is a minimal co-amenable ideal of $\ti$ if and only if there are closed ideals $I$ and $J$ of $A$ and $B$, respectively such that $\lll=Tri(I,M,J)$, $I$ and $J$ are minimal co-amenable ideals of $A$ and $B$, respectively.
\end{itemize}
\end{theorem}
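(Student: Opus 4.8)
The plan is to derive both equivalences from a structural description of the closed ideals of $\ti$, combined with the fact recalled in the Introduction (from \cite{medghalchi}) that a triangular Banach algebra with a nonzero off-diagonal bimodule is never amenable. Throughout I would use Remark~\ref{ideal}, which already supplies the $(\Leftarrow)$ implication of $(i)$: if $\lll=Tri(I,M,J)$ with $A$ being $I$-amenable and $B$ being $J$-amenable, then $\ti$ is $\lll$-amenable. So the substance of $(i)$ is the converse.

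First I would show that every closed ideal $\lll$ of the \emph{unital} algebra $\ti$ has the form $Tri(I,E,J)$. Here unitality is essential: writing $p=\begin{pmatrix} 1_A & 0 \\ 0 & 0 \end{pmatrix}$ and $q=\begin{pmatrix} 0 & 0 \\ 0 & 1_B \end{pmatrix}$ for the diagonal idempotents of $\ti$, I observe that for any $t\in\lll$ the elements $ptp$, $qtq$, $ptq$ again lie in $\lll$ and recover, respectively, the top-left corner, the bottom-right corner, and the off-diagonal entry of $t$. Setting $I=\{\,a\in A : \begin{pmatrix} a & 0 \\ 0 & 0 \end{pmatrix}\in\lll\,\}$, $J=\{\,b\in B : \begin{pmatrix} 0 & 0 \\ 0 & b \end{pmatrix}\in\lll\,\}$ and $E=\{\,m\in M : \begin{pmatrix} 0 & m \\ 0 & 0 \end{pmatrix}\in\lll\,\}$, a routine check shows that $I$ and $J$ are closed ideals of $A$ and $B$, that $E$ is a closed $(A,B)$-subbimodule of $M$ satisfying $IM+MJ\subseteq E$, and that $\lll=Tri(I,E,J)$.

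The crux is then to promote $E$ to all of $M$. The compatibility $IM+MJ\subseteq E$ ensures that the entrywise coset map induces a continuous algebra isomorphism $\ti/\lll\cong Tri(A/I,M/E,B/J)$, topological by the inverse mapping theorem, where $M/E$ carries the natural unital $(A/I,B/J)$-bimodule structure. Since $\ti$ is $\lll$-amenable, $\ti/\lll$ is amenable, so $Tri(A/I,M/E,B/J)$ is amenable. Suppose $E\neq M$, so $M/E\neq 0$. If $I=A$, then since $M$ is unital $M=1_A\cdot M\subseteq IM\subseteq E$, forcing $E=M$, a contradiction; hence $A/I\neq 0$, and symmetrically $B/J\neq 0$. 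Thus $Tri(A/I,M/E,B/J)$ is a genuine triangular Banach algebra with nonzero off-diagonal bimodule, which by \cite{medghalchi} is not amenable --- contradicting the previous sentence. Therefore $E=M$, so $\lll=Tri(I,M,J)$, and Remark~\ref{ideal} yields that $A$ is $I$-amenable and $B$ is $J$-amenable, completing $(i)$. This step --- handling the degenerate corners $I=A$ or $J=B$ by unitality so that the non-amenability theorem is applied only to a genuine triangular algebra --- is the main obstacle; the decomposition of the ideal and the transfer of amenability are routine.

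Finally, part $(ii)$ follows by feeding $(i)$ into Proposition~\ref{mm}. If $\lll$ is a minimal co-amenable ideal then in particular $\ti$ is $\lll$-amenable, so $(i)$ gives $\lll=Tri(I,M,J)$ with $A$ being $I$-amenable and $B$ being $J$-amenable, and Proposition~\ref{mm} shows that $I$ and $J$ are minimal co-amenable. Conversely, given $\lll=Tri(I,M,J)$ with $I$ and $J$ minimal co-amenable, Remark~\ref{ideal} gives that $\ti$ is $\lll$-amenable; to verify minimality I would take any closed ideal $\lll'\subseteq\lll$ with $\ti/\lll'$ amenable, use $(i)$ to write $\lll'=Tri(I',M,J')$ with $A$ being $I'$-amenable and $B$ being $J'$-amenable, note that $\lll'\subseteq\lll$ forces $I'\subseteq I$ and $J'\subseteq J$, and invoke the minimality of $I$ and $J$ to conclude $I'=I$ and $J'=J$, whence $\lll'=\lll$.
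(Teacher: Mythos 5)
Your proposal is correct and follows essentially the same route as the paper: both use the diagonal idempotents of the unital algebra to decompose any closed ideal as $Tri(I,N,J)$ with $IM+MJ\subseteq N$, identify $\ti/\lll$ with $Tri(A/I,M/N,B/J)$, force $N=M$ via non-amenability of a triangular algebra with nonzero corner module, and then obtain $(ii)$ exactly as you do, from Proposition~\ref{mm} in one direction and minimality of $I$ and $J$ in the other. The only (cosmetic) divergence is the key step $N=M$: where you cite the theorem of \cite{medghalchi} and carefully rule out the degenerate corners $I=A$, $J=B$ by unitality, the paper instead applies \cite[Theorem 2.3.7]{runde} directly to the complemented nilpotent ideal $Tri(0,M/N,0)$ of the amenable quotient --- an argument that needs no such guard and is, in substance, the proof behind the result of \cite{medghalchi} that you invoke.
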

\begin{proof}
$(i)$ Suppose that  $\ti$ is $\lll$-amenable. Let $\textbf{1}=\begin{pmatrix}
  1_A & 0 \\
  0 & 1_B
\end{pmatrix}$ be the unity of $\ti$, where $1_A$ is the unity of $A$ and $1_B$ is the unity of $B$. We consider the elements $\begin{pmatrix}
  a & 0 \\
  0 & 0
\end{pmatrix}$, $\begin{pmatrix}
  0 & m \\
  0 & 0
\end{pmatrix}$ and $\begin{pmatrix}
  0 & 0 \\
  0 & b
\end{pmatrix}$ equal to $a$, $b$ and $b$, respectively, where $a\in A$, $m\in M$ and $b\in B$.
\par 
Define 
\[ I=\lll \cap Tri(A,0,0), \quad N=\lll \cap Tri(0,M,0) \quad \text{and} \quad J=\lll \cap Tri(0,0,B).\]
It is easily checked that $I$ is a closed ideal of $A$, $J$ is a closed ideal of $B$ and $N$ is a closed $(A,B)$-sub-bimodule of $M$ such that $IM+MJ\subseteq N$. Moreover, $Tri(I,N,J)\subseteq \lll$. Let $\begin{pmatrix}
  x & y \\
  0 & z
\end{pmatrix}\in \lll$. Since $\lll$ is an ideal, it follows that 
\[\begin{pmatrix}
  x & 0 \\
  0 & 0
\end{pmatrix}=\begin{pmatrix}
  x & y \\
  0 & z
\end{pmatrix}\begin{pmatrix}
  1_A & 0 \\
  0 & 0
\end{pmatrix}\in I,\] 
\[\begin{pmatrix}
  0 & 0 \\
  0 & z
\end{pmatrix}=\begin{pmatrix}
  0 & 0 \\
  0 & 1_B
\end{pmatrix}\begin{pmatrix}
  x & y \\
  0 & x
\end{pmatrix}\in J \] 
and 
\[ \quad \quad \begin{pmatrix}
  0 & y \\
  0 & 0
\end{pmatrix}=\begin{pmatrix}
  1_A & 0 \\
  0 & 0
\end{pmatrix}\begin{pmatrix}
  x & y \\
  0 & x
\end{pmatrix}\begin{pmatrix}
  0 & 0 \\
  0 & 1_B
\end{pmatrix}\in N .\]
So $\begin{pmatrix}
  x & y \\
  0 & z
\end{pmatrix}\in Tri(I,N,J)$. Therefore $\lll=Tri(I,N,J)$. It can be seen directly that $\ti / \lll$ and $Tri(A/I, M/N,B/J)$ are isometrically isomorphic ($M/N$ becomes a Banach $(A/I, B/J)$-bimodule by well-defined operations). From hypothesis we have $Tri(A/I, M/N,B/J)$ is an amenable Banach algebra. Since $Tri(0,M/N,0)$ is a closed nilpotent ideal which is complemented in $Tri(A/I, M/N,B/J)$, from \cite[Theorem 2.3.7]{runde} it follows that $N=M$. So $\lll =Tri(I,M,J)$. By Remark~\ref{ideal}, $A/I$ and $B/J$ are amenable. 
\par
The converse is immediate from Remark~\ref{ideal}.
\par 
$(ii)$ Suppose that $\lll$ is a minimal co-amenable ideal of $\ti$. So $\ti$ is $\lll$-amenable and by $(i)$ we have $\lll = Tri(I,M,J)$ where $I$ and $J$ are closed ideals of $A$ and $B$, respectively and $A/I$ and $B/J$ are amenable. Now by Proposition~\ref{mm}, we obtain the desired result. 
\par 
Conversely, let $\lll=Tri(I,M,J)$, $I$ and $J$ are minimal co-amenable ideals of $A$ and $B$, respectively. By Remark~\ref{ideal}, $\ti / \lll$ is amenable. Suppose that $\lll^\prime \subseteq \lll$ be a closed ideal such that $\ti / \lll^\prime$ is amenable. From $(i)$ there are closed ideals $I^\prime$ and $J^\prime$ of $A$ and $B$, respectively such that $\lll^\prime=Tri(I^\prime,M,J^\prime)$, $A$ is $I^\prime$-amenable and $B$ is $J^\prime$-amenable, respectively. So $I^\prime \subseteq I$ and $J^\prime \subseteq J$. Since $I$ and $J$ are minimal co-amenable ideals, it follows that $I^\prime = I$ and $J^\prime = J$. So $\lll = \lll^\prime$ and hence $\lll$ is a minimal co-amenable ideal of $\ti$.
\end{proof}
This theorem provides the converse of Proposition~\ref{mm} in the case of unital triangular Banach algebras. Also this theorem shows that if $Tri(A,M,B)$ is a unital triangular Banach algebra and $A$ and $B$ are amenable Banach algebras, then $Tri(0,M,0)$ is unique minimal co-amenable ideal of $Tri(A,M,B)$. Moreover, by the above theorem we can obtain minimal co-amenable ideals of various unital triangular Banach algebras.
\begin{example}
Let $A$ be a unital amenable Banach algebra, $B$ be a unital simple and non-amenable Banach algebra and $M$ be a unital Banach $(A,B)$-bimodule. Then by Theorem~\ref{uni}, the closed ideal $Tri(0,M,B)$ is unique minimal co-amenable ideal of $Tri(A,M,B)$, since $0$ and $B$ are unique minimal co-amenable ideals of $A$ and $B$, respectively.
\end{example} 
In the end of this subsection we study the relative amenability of upper triangular Banach algebras. The \textit{upper triangular Banach algebra} $T_n(A)$ ($n\geq 1$) contains all $n\times n$ upper triangular matrices over $A$ with $l^1$-norm. The nilpotent closed ideal of all strictly upper triangular matrices in $T_n(A)$ is denoted by $NT_n(A)$ and the closed subalgebra of all diagonal matrices in $T_n(A)$ is denoted by $D_n(A)$. 
\begin{theorem}\label{tn}
Let $T_n(A)$ ($n\geq 1$) be the upper triangular Banach algebra over the Banach algebra $A$. Then 
\begin{itemize}
\item[(i)] $T_n(A)$ is amenable relative to $NT_n(A)$ if and only if $A$ is amenable;
\item[(ii)] if $A$ is amenable, then $NT_n(A)$ is a minimal co-amenable ideal of $T_n(A)$. 
\end{itemize}
\end{theorem}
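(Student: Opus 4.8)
The plan is to reduce both parts to the single structural fact that the quotient $T_n(A)/NT_n(A)$ is, as a Banach algebra, the $n$-fold $\ell^1$-direct sum $\bigoplus_{i=1}^{n}A$, and then to invoke the machinery already in place: the direct-sum amenability result cited in the text and Theorem~\ref{mini}. Everything else is bookkeeping with the matrix decomposition $T_n(A)=D_n(A)\oplus NT_n(A)$.

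For part (i), I would first note that $NT_n(A)$ is genuinely a closed (two-sided) ideal, since multiplying an upper triangular matrix by a strictly upper triangular one on either side yields a strictly upper triangular matrix. Each $X=(x_{ij})_{i\le j}$ splits uniquely as $X=D+N$ with $D\in D_n(A)$ diagonal and $N\in NT_n(A)$, so a coset $X+NT_n(A)$ is determined by the diagonal tuple $(x_{11},\dots,x_{nn})$. The crucial computation is the quotient norm: for $N'\in NT_n(A)$ the $\ell^1$-norm gives $\Vert X+N'\Vert=\sum_i\Vert x_{ii}\Vert+\sum_{i<j}\Vert x_{ij}+n'_{ij}\Vert$, and taking the infimum over $N'$ cancels the off-diagonal entries, so $\Vert X+NT_n(A)\Vert=\sum_i\Vert x_{ii}\Vert$. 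Hence $X+NT_n(A)\mapsto(x_{11},\dots,x_{nn})$ is an isometric algebra isomorphism of $T_n(A)/NT_n(A)$ onto the $\ell^1$-direct sum $\bigoplus_{i=1}^{n}A$. Since a finite direct sum of Banach algebras is amenable if and only if each summand is amenable (cf. \cite[page 55]{runde}), this quotient is amenable exactly when $A$ is, which is the assertion. Alternatively, and closer to the previous subsection, one may induct on $n$ via $T_n(A)=Tri(A,A^{n-1},T_{n-1}(A))$ and $NT_n(A)=Tri(0,A^{n-1},NT_{n-1}(A))$, applying Remark~\ref{ideal} to peel off one diagonal block at a time, with base case $T_1(A)=A$.

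For part (ii), assuming $A$ amenable, the plan is simply to check the hypotheses of Theorem~\ref{mini} for $N=NT_n(A)$ inside $T_n(A)$. It is closed; it is nilpotent, as $NT_n(A)^n=0$; and it is complemented, the closed complementary subspace being $D_n(A)$ (indeed $T_n(A)=D_n(A)\oplus NT_n(A)$ is an isometric $\ell^1$-decomposition, as seen above). Finally $T_n(A)$ is $NT_n(A)$-amenable by part (i), since $A$ is amenable. Theorem~\ref{mini} then gives immediately that $NT_n(A)$ is a minimal co-amenable ideal of $T_n(A)$.

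I do not expect a serious obstacle: the argument is almost entirely a matrix computation feeding into results already established. The one point requiring care is the quotient-norm calculation in part (i), since it is the \emph{isometric} (or at least Banach-algebra) identification of $T_n(A)/NT_n(A)$ with $\bigoplus_{i=1}^{n}A$—rather than a mere algebraic isomorphism with an uncontrolled norm—that legitimizes the appeal to the direct-sum amenability theorem.
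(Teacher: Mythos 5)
Your proof is correct and follows essentially the same route as the paper: both parts rest on the isometric identification $T_n(A)/NT_n(A)\cong D_n(A)$ (the $\ell^1$-direct sum of $n$ copies of $A$) for $(i)$, and on verifying that $NT_n(A)$ is a nilpotent complemented closed ideal so that Theorem~\ref{mini} applies for $(ii)$. Your explicit quotient-norm computation merely fills in what the paper dismisses with ``we can easily see,'' and the alternative induction via $Tri(A,A^{n-1},T_{n-1}(A))$ and Remark~\ref{ideal} is a valid but unneeded extra.
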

\begin{proof}
$(i)$ We have $T_n(A)=D_n(A)\oplus NT_n(A)$ as direct sum of closed subspaces and $D_n(A)=A^n$. We can easily see that $T_n(A) /NT_n(A)$ and $D_n(A)$ are isometrically isomorphic. So $T_n(A) /NT_n(A)$ is amenable if and only if $A$ is amenable.
\par 
$(ii)$ If $A$ is amenable, by $(i)$, $T_n(A) /NT_n(A)$ is amenable. Since $NT_n(A)$ is a nilpotent complemented closed ideal in $T_n(A)$, from Theorem~\ref{mini}, it follows that $NT_n(A)$ is a minimal co-amenable ideal of $T_n(A)$.
\end{proof}
We have the following examples which satisfy the assumptions of the preceding theorem.
\begin{example}
The upper triangular Banach algebra $T_n(\mathbb{C})$ is $NT_n(\mathbb{C})$-amenable and $NT_n(\mathbb{C})$ is a minimal co-amenable ideal of $T_n(\mathbb{C})$.
\end{example}
\begin{example}
Let $G$ be a locally compact group. By Johnson's theorem and Theorem~\ref{tn}, we find that $T_n(L^1(G))$ is $NT_n(L^1(G))$-amenable if and only if $G$ is an amenable group. Moreover, if $G$ is an amenable group, then $NT_n(L^1(G))$ is a minimal co-amenable ideal of $T_n(L^1(G))$.
\end{example}
\subsection*{Banach algebras associated to locally compact groups}
\par 
Johnson's theorem \cite{j} states that a locally compact group $G$ is amenable if and only if $L^{1}(G)$ is an amenable Banach algebra. We present a generalization of this theorem in the concept of relative amenability which is in fact the answer to the question raised in Section 1 about the relative amenability of the group algebras $L^1(G)$. To do this, we introduce special ideals of $L^{1}(G)$ that are associated with normal subgroups of $G$. We provide the topics needed for this as follows from \cite[Chapter 3]{reit}.
\par
Let $H$ be a closed normal subgroup of $G$. Let $dx$, $d\xi$ and $d\dot{x}$ be Haar measures on $G$, $H$, and $G/H$ respectively, which are canonically related ($d\xi d\dot{x}=dx$) and consider the mapping $T_{H}:C_{c}(G)\rightarrow C_{c}(G/H)$ defined by $T_{H}f(\dot{x})=\int_{H}f(x\xi)d\xi$ ($f\in C_{c}(G), \dot{x}\in G/H $), where $x$ is any element of $G$ such that $\pi _{H}(x)=\dot{x}$. The mapping $T_{H}$ is a surjective bounded linear map. Let $\mathcal{J}(G, H)$ be the kernel of $T_H$ in $C_c(G)$:
\[  \mathcal{J}(G, H)= \lbrace k\in C_c(G) \, : \, T_H(k)=0 \rbrace. \]
Let $J^1(G, H)$ be the closure of $\mathcal{J}(G, H)$ in $L^1(G)$. Then 
\[ L^1(G/H)\cong L^1(G)/J^1(G, H).\]
The isomorphism $\cong$ is algebraic and isometric, $L^1(G)/J^1(G, H)$ being provided with 
the quotient norm, and is defined via the extension of the mapping $T_H$, by continuity, to the whole space $L^1(G)$ (\cite[Proposition 3.4.5]{reit}). We denote the extended mapping still by $T_H$ and also write 
\[T_{H}f(\dot{x})=\int_{H}f(x\xi)d\xi \quad (f\in L^1(G) ).\]
$T_H$ maps $L^1(G)$ onto $L^1(G/H)$. $T_H$ is an algebra $*$-homomorphism and the subspace $J^1(G, H)$, i.e. the kernel of $T_H$, is a closed two-sided ideal of $L^1(G)$ (\cite[Theorem 3.5.4 ]{reit}). Note that $H=\{ e \}$ if and only if $J^1(G, H)=0$, where $e $ is the identity of $G$.
\par 
In the following, the $J^1(G, H)$ is the closed ideal in $L^1(G)$ described above, where $G$ is a locally compact group and $H$ is a closed normal subgroup of $G$.
\par 
In view of above discussions and the Johnson theorem, we obtain the following theorem. 
\begin{theorem}\label{jon}
Let $G$ be a locally compact group. For each closed normal subgroup $H$ of $G$ there is a closed ideal $J^1(G, H)$ such that $L^1(G)$ is $J^1(G, H)$-amenable if and only if $G/H$ is amenable as a locally compact group. Moreover, $H=\{ e \}$ ($ e $ is the identity of $G$) if and only if $J^1(G, H)=0$.
\end{theorem}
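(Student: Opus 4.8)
The plan is to assemble the statement from two ingredients that have already been put in place: the isometric algebra isomorphism $L^1(G)/J^1(G,H) \cong L^1(G/H)$ recorded in the preceding discussion (obtained by extending the map $T_H$ to all of $L^1(G)$), and Johnson's theorem that a locally compact group is amenable exactly when its group algebra is an amenable Banach algebra. The whole content of the theorem is to reduce $J^1(G,H)$-amenability of $L^1(G)$ to ordinary amenability of the quotient, and then to identify that quotient with the group algebra of $G/H$. So no new analysis is needed; the proof is a chain of equivalences.

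First I would note that, since $H$ is a closed normal subgroup of the locally compact group $G$, the quotient $G/H$ is again a locally compact group, so that $L^1(G/H)$ is a genuine group algebra and Johnson's theorem is applicable to it. Next, by the Definition of relative amenability, $L^1(G)$ is $J^1(G,H)$-amenable precisely when the Banach algebra $L^1(G)/J^1(G,H)$ is amenable. Then, invoking the isometric algebra isomorphism $L^1(G)/J^1(G,H) \cong L^1(G/H)$ together with the fact that amenability is invariant under isometric algebra isomorphism, I would conclude that $L^1(G)/J^1(G,H)$ is amenable if and only if $L^1(G/H)$ is amenable. Finally, applying Johnson's theorem to $G/H$ yields that $L^1(G/H)$ is amenable if and only if $G/H$ is amenable, which closes the equivalence.

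For the \emph{moreover} clause, I would simply cite the already-stated fact (from Reiter's Theorem 3.5.4) that $H=\{ e \}$ if and only if $J^1(G,H)=0$; this requires no additional argument, since it is part of the standing setup for $J^1(G,H)$.

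The hard part is essentially absent here: all the delicate work, namely the construction of $T_H$, the verification that it is a surjective algebra $*$-homomorphism with kernel $J^1(G,H)$, and the proof that the induced map is an isometric isomorphism, has been imported wholesale from Reiter's book. The only points demanding a moment's care are to confirm that amenability genuinely transfers across the isometric isomorphism (rather than merely across a bounded isomorphism, though that too would suffice by the results invoked earlier in the paper) and, more importantly, to apply Johnson's theorem to the correct group $G/H$ and not to $G$ itself. With those observations in place, the theorem follows at once, and it constitutes precisely the promised relative-amenability form of Johnson's theorem answering the question posed in Section 1.
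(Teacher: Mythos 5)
Your proposal is correct and matches the paper's argument exactly: the paper derives Theorem~\ref{jon} immediately from the preceding discussion, namely the isometric isomorphism $L^1(G)/J^1(G,H)\cong L^1(G/H)$ imported from Reiter (including the fact that $H=\{e\}$ if and only if $J^1(G,H)=0$) combined with Johnson's theorem applied to the quotient group $G/H$. Nothing in your chain of equivalences departs from, or adds a gap to, the paper's own reasoning.
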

According to this point that $H=\{ e \}$ if and only if $J^1(G, H)=0$, it follows that Theorem~\ref{jon} is a generalization of the Johnson's theorem in the concept of relative amenability.
\par 
One of the topics of interest is the study of amenability of the second dual of group algebra. In \cite{gha}, the authors showed that $ L^1(G )^{**}$ with the first Arens product is amenable if and only if $G$ is finite. We extend this theorem in the concept of relative amenability. Also we check the relative amenability of the second dual of $L^1(G)$.
\begin{theorem}
 Let $G$ be a locally compact group. For each closed normal subgroup $H$ of $G$ there is a closed ideal $J^1(G, H)$ such that $ L^1(G )^{**}$ with the first Arens product is $J^1(G, H)^{**}$-amenable if and only if $G/H$ is finite.
\end{theorem}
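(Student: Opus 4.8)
The plan is to reduce the statement to the ordinary amenability of the bidual of the group algebra of the quotient group $G/H$, and then to invoke the theorem of \cite{gha} applied to $G/H$ rather than to $G$. The guiding observation is that, by the bidual identification recorded earlier, relative amenability of a second dual with respect to $J^1(G,H)^{**}$ is nothing other than the amenability of a bidual of a quotient algebra, and this quotient algebra is precisely $L^1(G/H)$.

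First I would recall from the preliminary discussion, following \cite[Proposition 3.4.5, Theorem 3.5.4]{reit}, that $J^1(G,H)$ is a closed two-sided ideal of $L^1(G)$ and that the extended map $T_H$ furnishes an isometric algebra isomorphism
\[ L^1(G)/J^1(G,H) \cong L^1(G/H). \]
Since $H$ is a closed normal subgroup, $G/H$ is again a locally compact group, so the criterion of \cite{gha} is applicable to it. Next I would pass to second duals equipped with the first Arens product. An isometric algebra isomorphism $\varphi\colon X\to Y$ induces, by taking the second transpose, an isometric algebra isomorphism $\varphi^{**}\colon (X^{**},\square)\to (Y^{**},\square)$, because the bidual of an algebra homomorphism is a homomorphism for the first Arens product; applying this to the displayed isomorphism yields
\[ \big((L^1(G)/J^1(G,H))^{**},\square\big) \cong \big(L^1(G/H)^{**},\square\big). \]
On the other hand, the identification $\big((A/I)^{**},\square\big)\cong \big(A^{**},\square\big)/I^{**}$ (\cite[Page 250]{dals}), in which $I^{**}$ is a closed ideal of $(A^{**},\square)$, gives with $A=L^1(G)$ and $I=J^1(G,H)$
\[ \big((L^1(G)/J^1(G,H))^{**},\square\big) \cong \big(L^1(G)^{**},\square\big)/J^1(G,H)^{**}. \]
Combining the last two displays produces an isomorphism of Banach algebras
\[ \big(L^1(G)^{**},\square\big)/J^1(G,H)^{**} \cong \big(L^1(G/H)^{**},\square\big). \]

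Finally, by the definition of relative amenability, $(L^1(G)^{**},\square)$ is $J^1(G,H)^{**}$-amenable exactly when the quotient $(L^1(G)^{**},\square)/J^1(G,H)^{**}$ is amenable, which by the displayed isomorphism occurs exactly when $(L^1(G/H)^{**},\square)$ is amenable; and by \cite{gha}, applied to the locally compact group $G/H$, the latter holds if and only if $G/H$ is finite. The only genuine point to verify is the compatibility of the second transpose with the first Arens product, so that the intermediate maps are algebra (and not merely Banach-space) isomorphisms, together with the fact, already used in the bidual identification above, that $J^1(G,H)^{**}$ is a closed ideal of $(L^1(G)^{**},\square)$; I expect this bookkeeping to be the main obstacle, while the remainder is a direct concatenation of the isometric isomorphism of \cite{reit}, the bidual identification from \cite{dals}, and the finiteness criterion of \cite{gha}.
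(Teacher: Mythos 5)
Your proof is correct and takes essentially the same route as the paper: the isometric isomorphism $L^1(G)/J^1(G,H)\cong L^1(G/H)$ from \cite{reit}, the bidual identification $\big((A/I)^{**},\square\big)\cong \big(A^{**},\square\big)/I^{**}$ from \cite[Page 250]{dals}, and the finiteness criterion of \cite{gha} applied to the locally compact group $G/H$. The one piece of bookkeeping you flag --- that second adjoints of algebra homomorphisms are homomorphisms for the first Arens product --- is a standard fact that the paper leaves implicit, and it goes through without difficulty.
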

\begin{proof}
The closed $J^1(G, H)$ ideal is obtained as above. We see that $L^1 (G)/J^1(G, H)\cong L^1 (G/H)$ as isometric isomorphism. So by \cite{gha}, $G/H$ is a finite group if and only if $L^1 (G/H)^{**}$ (with the first Arens product) is amenable, if and only if $(L^1 (G)/J^1(G, H))^{**}$ is amenable, if and only if $ L^1(G )^{**}$ is $J^1(G, H)^{**}$-amenable (with the first Arens product). Note that in the above proof, this result is used that by \cite[Page 250]{dals}, $(L^1 (G)/J^1(G, H))^{**}\cong L^1(G )^{**}/J^1(G, H)^{**}$ with the first Arens product.
\end{proof}
Let $G$ be a locally compact group. Let $C_b(G)$ be the Banach algebra of all bounded continuous complex-valued functions on $G$ with the sup norm topology, and $LUC(G)$ denote the closed subspace of all $f\in C_b(G)$ such that the map $x\mapsto l_x f$ from $G$ into $C_b(G)$ is continuous, where $(l_x f)(y)=f(xy)$ and $x,y\in G$, i.e. $f$ is a left uniformly continuous function on $G$. Then $LUC(G)^*$ equipped with the Arens multiplication defined by $\langle nm,f \rangle = \langle n, m_l f \rangle$, $n,m \in LUC(G)^*$, $f\in LUC(G)$, where $m_l f(x)=\langle m, l_x f \rangle$, $x\in G$, is a Banach algebra. Also, the measure algebra $M(G)$ may be identified with a closed subalgebra of $LUC(G)^*$ by the natural embeding $\langle \nu , f \rangle = \int f(x)d\nu (x)$, $f\in LUC(G)$, $\nu\in M(G)$. We denote by $C_0(G)$ the functions in $C_b(G)$ which vanish at infinity. Let $C_0 (G)^\bot = \lbrace m\in LUC(G)^* \, : \, m(f)=0 \, \text{for all } f\in C_0 (G)\rbrace$. From \cite[Lemma 1.1 ]{gha iso}, it follows that $LUC(G)^* =  M(G)\oplus C_0 (G)^\bot$ as direct sum of closed subspaces and $C_0 (G)^\bot$ is a closed ideal in $LUC(G)^*$. Let $E$ be a right identity of $L^1(G)^{**}$ with $E\geq 0$, $\Vert E \Vert=1$. Then $EL^1(G)^{**}$ is a closed sublagebra of $L^1(G)^{**}$. In \cite{gha iso0}, it is shown that $EL^1(G)^{**}\cong LUC(G)^*$ as isomorphism of Banach algebras. So we can consider $C_0 (G)^\bot$ as a closed subspace of $L^1(G)^{**}$. Furthermore, $(I-E)L^1(G)^{**}$ is a closed subspace of $L^1(G)^{**}$ and it is clear that $C_0 (G)^\bot\bigcap (I-E)L^1(G)^{**}=0$. In the following theorem we see that $I=C_0 (G)^\bot\oplus(I-E)L^1(G)^{**}$ is a closed ideal of $L^1(G)^{**}$. Also in this theorem the amenability of $L^1(G)^{**}$ relative to $I$ has been studied.
 \begin{theorem}
  Let $G$ be a locally compact group, and  let $ E$ be a right identity of $L^1(G)^{**}$ with $E\geq 0$, $\Vert E \Vert=1$. Then $I=C_0 (G)^\bot\oplus(I-E)L^1(G)^{**}$ is a closed ideal of $L^1(G)^{**}$, and $ L^1(G )^{**}$ is $I$-amenable if and only if $G$ is discrete and amenable as a group.
 \end{theorem}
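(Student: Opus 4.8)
The plan is to identify the quotient algebra $L^1(G)^{**}/I$ with the measure algebra $M(G)$ and then invoke the known characterization of amenability of $M(G)$. Throughout, products are taken in the first Arens product and written by juxtaposition. First I would establish that $I$ is a closed two-sided ideal. Because $E$ is a right identity, $EE=E$, so the bounded map $P(x)=Ex$ is idempotent and yields the decomposition $L^1(G)^{**}=EL^1(G)^{**}\oplus(I-E)L^1(G)^{**}$ as the range and kernel of $P$. The decisive observation is that $(I-E)L^1(G)^{**}$ is annihilated on the left: for every $a\in L^1(G)^{**}$ one has $a(x-Ex)=ax-(aE)x=ax-ax=0$, since $aE=a$; and it is a right ideal because $(x-Ex)a=(I-E)(xa)$, so it is in fact a closed two-sided ideal. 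Since $C_0(G)^\bot$ is a closed ideal of the closed subalgebra $EL^1(G)^{**}\cong LUC(G)^*$, decomposing each factor through $P$ shows that $am$ and $ma$ lie in $C_0(G)^\bot\oplus(I-E)L^1(G)^{**}$ for all $a\in L^1(G)^{**}$ and $m\in I$; hence $I$ is a closed ideal of $L^1(G)^{**}$.

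Next I would compute the quotient. Combining the decomposition above with the identification $EL^1(G)^{**}\cong LUC(G)^*=M(G)\oplus C_0(G)^\bot$ gives $L^1(G)^{**}=M(G)\oplus C_0(G)^\bot\oplus(I-E)L^1(G)^{**}$ as a direct sum of closed subspaces, with $M(G)$ realized as a closed subalgebra of $L^1(G)^{**}$. As $I=C_0(G)^\bot\oplus(I-E)L^1(G)^{**}$, the independence of the three summands gives $M(G)\cap I=0$ and $M(G)+I=L^1(G)^{**}$, so the composition of the inclusion $M(G)\hookrightarrow L^1(G)^{**}$ with the quotient map is a continuous bijection onto $L^1(G)^{**}/I$; by the inverse mapping theorem it is a Banach-space isomorphism, and since $M(G)$ is a subalgebra and $I$ an ideal it is multiplicative. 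Thus $L^1(G)^{**}/I\cong M(G)$ as Banach algebras.

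Consequently $L^1(G)^{**}$ is $I$-amenable if and only if $M(G)$ is an amenable Banach algebra, and by the theorem of Dales, Ghahramani and Helemskii that $M(G)$ is amenable precisely when $G$ is discrete and amenable, the conclusion follows. I expect the main obstacle to be the second step: one must confirm that the Banach-space identification $L^1(G)^{**}/I\cong M(G)$ genuinely respects the first Arens product, that is, that the product carried by $M(G)$ as a subalgebra of $EL^1(G)^{**}$ coincides with the convolution product of the measure algebra. This is exactly what the cited isomorphism $EL^1(G)^{**}\cong LUC(G)^*$ together with the embedding of $M(G)$ as a closed subalgebra of $LUC(G)^*$ provides, so the verification reduces to tracking these two identifications rather than any new estimate.
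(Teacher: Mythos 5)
Your proposal is correct and takes essentially the same route as the paper: the decomposition $L^1(G)^{**}=M(G)\oplus C_0(G)^\bot\oplus(I-E)L^1(G)^{**}$, the identification $L^1(G)^{**}/I\cong M(G)$ as Banach algebras, and the appeal to the Dales--Ghahramani--Helemskii characterization of when $M(G)$ is amenable. Your explicit verifications (that $(I-E)L^1(G)^{**}$ is annihilated on the left because $aE=a$ in the first Arens product, the factor-by-factor check that $I$ is a two-sided ideal, and the inverse-mapping-theorem argument identifying the quotient with $M(G)$) simply fill in steps the paper dismisses as ``easily checked.''
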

\begin{proof}
We have the decomposition 
\[ L^1(G)^{**}= EL^1(G)^{**} \oplus (I-E)L^1(G)^{**}, \]
where $EL^1(G)^{**}$ is a closed subalgebra of $L^1(G)^{**}$ and $(I-E)L^1(G)^{**}$ is a closed ideal of $L^1(G)^{**}$. Also, $EL^1(G)^{**}\cong LUC(G)^* =  M(G)\oplus C_0 (G)^\bot$ where $C_0 (G)^\bot$ is a closed ideal in $LUC(G)^*$ and $M(G)$ is a closed subalgebra of $LUC(G)^*$. So $L^1(G)^{**}\cong M(G)\oplus C_0 (G)^\bot \oplus (I-E)L^1(G)^{**}$. Since $C_0 (G)^\bot$ is a closed ideal in $LUC(G)^*$ and $(I-E)L^1(G)^{**}$ is a closed ideal of $L^1(G)^{**}$, it is easily checked that $I=C_0 (G)^\bot\oplus(I-E)L^1(G)^{**}$ is a closed ideal of $L^1(G)^{**}$. All of the isomorphisms mentioned are Banach algebra isomorphisms. So $L^1(G)^{**}/I \cong M(G)$. In \cite{dals2}, it has been proved that $M(G)$ is amenable as a Banach algebra if and only if $G$ is discrete and amenable as a group. Given this result, the proof of the theorem is complete.
\end{proof}
 Let $G$ be a locally compact group, and $M(G)$ be the measure algebra. The subspace of $M(G)$ consisting of the continuous measures is denoted by $M_c(G)$, so that, for $\nu \in M(G)$, we have $\nu \in M_c(G)$ if and only if $\nu(\lbrace s\rbrace)=0$ ($s\in G$), and the subspace of discrete measures is $M_d(G)$, identified with $l^1(G)$. The subspace $M_c(G)$ is a closed ideal of $M(G)$ and $l^1(G)$ is a closed subalgebra of $M(G)$. In the following theorem we check the amenability of $M(G)$ relative to $M_c(G)$ (see \cite{dals2}). 
\begin{theorem}
Let $G$ be a locally compact group. 
\begin{itemize}
\item[(i)] If $G$ is amenable as a discrete group, then $M(G)$ is amenable relative to $M_c(G)$.
\item[(ii)] If $M(G)$ is amenable relative to $M_c(G)$, then $G$ is an amenable group.
\end{itemize}
\end{theorem}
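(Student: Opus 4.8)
The plan is to identify the quotient $M(G)/M_c(G)$ with the convolution algebra of $G$ taken as a discrete group, and then to read off both assertions from Johnson's theorem \cite{j}. First I would use the decomposition recorded above, namely $M(G)=M_c(G)\oplus M_d(G)$ as an $l^1$-direct sum of closed subspaces, in which $M_d(G)=l^1(G)$ is a closed subalgebra and $M_c(G)$ is a closed ideal. The key point is to verify that passing to the quotient by $M_c(G)$ turns $M(G)$ into $M_d(G)$ as a \emph{Banach algebra}, not merely as a Banach space.

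The hard part will be this algebraic identification. Writing $\mu=\mu_d+\mu_c$ and $\nu=\nu_d+\nu_c$ for the atomic/continuous decompositions, I would check that $\mu_d*\nu_c$, $\mu_c*\nu_d$ and $\mu_c*\nu_c$ are all continuous measures (a translate of a continuous measure is continuous, and $M_c(G)$ is an ideal), so that the only atomic contribution to $\mu*\nu$ is $\mu_d*\nu_d$. Consequently the quotient map $q\colon M(G)\to M(G)/M_c(G)$ satisfies $q(\mu)q(\nu)=\mu_d*\nu_d+M_c(G)$, and its restriction to $M_d(G)$ is an isometric Banach-algebra isomorphism $M(G)/M_c(G)\cong M_d(G)=l^1(G)$, where $l^1(G)$ carries the convolution product of the discrete group $G_d$.

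With this identification in hand, both statements reduce to Johnson's theorem for the discrete (hence locally compact) group $G_d$, for which $L^1(G_d)=l^1(G)$: the algebra $l^1(G)$ is amenable if and only if $G_d$ is amenable, i.e.\ if and only if $G$ is amenable as a discrete group. For $(i)$, if $G$ is amenable as a discrete group then $l^1(G)\cong M(G)/M_c(G)$ is amenable, so $M(G)$ is $M_c(G)$-amenable. For $(ii)$, if $M(G)$ is $M_c(G)$-amenable then $l^1(G)$ is amenable, whence $G_d$ is amenable; since amenability as a discrete group implies amenability as a locally compact group, $G$ is an amenable group. I expect the only genuine work to be the verification that the atomic part of a convolution depends solely on the atomic parts of the factors; everything else is the cited structure of $M(G)$ from \cite{dals2} together with Johnson's theorem.
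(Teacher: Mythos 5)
Your proposal is correct and takes essentially the same route as the paper: both use the decomposition $M(G)=l^1(G)\oplus M_c(G)$ from \cite{dals2} to identify $M(G)/M_c(G)$ with $l^1(G)$ as a Banach algebra and then invoke Johnson's theorem for $G$ as a discrete group, obtaining $(ii)$ from the fact that amenability of $G$ as a discrete group implies its amenability as a locally compact group. The only difference is that you verify by hand the multiplicative identification (atomic parts of convolutions), which is in any case automatic once $M_c(G)$ is known to be a closed ideal and $l^1(G)$ a closed subalgebra, facts the paper simply cites.
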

\begin{proof}
We have $M(G)=l^1(G)\oplus M_c(G) $ as $l^1$-sum of Banach spaces (\cite{dals2}). So $M(G)/M_c(G) \cong l^1(G)$. Now by Johnson's theorem $M(G)$ is $M_c(G)$-amenable if and only if $G$ is amenable as a discrete group. Hence we coclude $(i)$. Also if $G$ is amenable as a discrete group, then $G$ is an amenable locally compact group and we obtain $(ii)$.
\end{proof}
In the following theorem we extend the main result of \cite{dals2} in the concept of relative amenability.
\begin{theorem}
Let $G$ be a locally compact group. For each closed normal subgroup $H$ of $G$ there is a closed ideal $I_H$ such that $M(G)$ is $I_H$-amenable if and only if $G/H$ is discrete and amenable as a group. Moreover, $H=\{ e \}$ if and only if $I_H=0$, where $e $ is the identity of $G$.
\end{theorem}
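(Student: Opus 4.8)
The plan is to realise $M(G/H)$ as a Banach-algebra quotient of $M(G)$ and then invoke the characterisation of amenable measure algebras from \cite{dals2}. Let $\pi_H\colon G\to G/H$ be the canonical quotient homomorphism. I would define the pushforward map $\theta_H\colon M(G)\to M(G/H)$ by $\langle \theta_H(\mu),f\rangle=\langle \mu, f\circ\pi_H\rangle$ for $f\in C_0(G/H)$, so that $\theta_H(\mu)$ is the image measure of $\mu$ under $\pi_H$. Since $\pi_H$ is a continuous group homomorphism, a direct computation with the convolution formula $\langle \mu*\nu,g\rangle=\int\!\!\int g(xy)\,d\mu(x)\,d\nu(y)$ together with the identity $g(\pi_H(x)\pi_H(y))=g(\pi_H(xy))$ shows that $\theta_H(\mu*\nu)=\theta_H(\mu)*\theta_H(\nu)$; thus $\theta_H$ is an algebra homomorphism, and it is contractive because passing to an image measure does not increase the total variation norm. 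Setting $I_H=\ker\theta_H$ then gives a closed two-sided ideal of $M(G)$, which is the ideal named in the statement.

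The central step is to prove that $\theta_H$ is surjective. Granting this, the induced homomorphism $\overline{\theta_H}\colon M(G)/I_H\to M(G/H)$ is a continuous bijection, so by the inverse mapping theorem (exactly as used in Theorems~\ref{sub}, \ref{c0} and the preceding measure-algebra theorems) $M(G)/I_H$ and $M(G/H)$ are isomorphic as Banach algebras. Consequently $M(G)$ is $I_H$-amenable if and only if $M(G)/I_H$ is amenable, if and only if $M(G/H)$ is amenable; and by the main result of \cite{dals2} the latter holds precisely when $G/H$ is discrete and amenable as a group. This yields the stated equivalence.

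For the final assertion, if $H=\{e\}$ then $\pi_H$ is a topological isomorphism of $G$ onto $G/H$, so $\theta_H$ is an isometric isomorphism and $I_H=0$. Conversely, if $H\neq\{e\}$, pick $h\in H$ with $h\neq e$; then $\delta_e-\delta_h$ is a nonzero element of $M(G)$ while $\theta_H(\delta_e-\delta_h)=\delta_{\pi_H(e)}-\delta_{\pi_H(h)}=0$, since $\pi_H(h)=\pi_H(e)$. Hence $0\neq\delta_e-\delta_h\in I_H$, so $I_H\neq 0$, which proves that $H=\{e\}$ if and only if $I_H=0$.

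The main obstacle is the surjectivity of $\theta_H$. I would establish it by choosing a Borel cross-section $s\colon G/H\to G$ of $\pi_H$ and, given $\nu\in M(G/H)$, forming the image measure $\mu=s_*\nu$; since $\pi_H\circ s=\mathrm{id}_{G/H}$ one gets $\theta_H(\mu)=\nu$. The delicate point is to verify that $s_*\nu$ is a genuine regular element of $M(G)$ of controlled norm, which reduces to the existence of a sufficiently well-behaved Borel section of the open surjection $\pi_H$; alternatively, one may simply cite the standard fact that the pushforward $M(G)\to M(G/H)$ induced by the quotient map of a closed normal subgroup is onto. Everything else in the argument is routine and mirrors the quotient-and-inverse-mapping technique already employed throughout Sections~3 and~4.
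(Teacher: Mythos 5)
Your proposal is correct and is essentially the paper's own proof: the paper obtains the continuous epimorphism $\phi\colon M(G)\to M(G/H)$ by citing \cite[Theorem 2.1]{lau1} --- which is precisely your pushforward $\theta_H$ --- sets $I_H=\ker\phi$, identifies $M(G)/I_H\cong M(G/H)$ via Remark~\ref{ker}, and invokes the main result of \cite{dals2}, so your ``cite the standard fact'' fallback for surjectivity is exactly the paper's route, and your explicit $\delta_e-\delta_h$ argument for the clause $H=\{e\}\Leftrightarrow I_H=0$ is in fact more self-contained than the paper's appeal to ``the proof of'' the cited theorem. The only caution is that your primary surjectivity route via a Borel cross-section is more delicate than sketched for non-second-countable $G$ (one needs a Kehlet-type section carrying compact sets to relatively compact sets so that $s_*\nu$ is Radon), which is why deferring to \cite[Theorem 2.1]{lau1} is the safer choice.
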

\begin{proof}
Let $H$ be a closed normal subgroup of $G$. By \cite[Theorem 2.1]{lau1}, there exists a continuous epimorphism $\phi: M(G)\rightarrow M(G/H)$. Then $I_H=Ker(\phi)$ is a closed ideal of $M(G)$. From Remark~\ref{ker}, it follows that $M(G)$ is $I_H$-amenable if and only if $M(G/H)$ is amenable. Now the main result of \cite{dals2} concludes the proof of the theorem. Also by the proof of the \cite[Theorem 2.1]{lau1}, it is obtained that $H=\{ e \}$ if and only if $I_H=0$.
\end{proof}
Let $G$ be a locally compact group, and let $A(G)$ be the Fourier algebra of $G$. Let $E$ be a closed subset of $G$. Define 
\[I(E)=\lbrace u\in A(G) \, : \, u(x)=0 \text{ for every } x\in E \rbrace. \]
$I(E)$ is a closed ideal in $A(G)$. Also $I(\{e\})$ is a non-trivial closed ideal in $A(G)$, where $e $ is the identity of $G$. In the following theorem we consider the $I(H)$-amenability of $A(G)$ for a closed subgroup $H$ of $G$. Especially, we see that always there exists a non-trivial closed ideal $I$ in $A(G)$ such that $A(G)$ is $I$-amenable. 
\begin{theorem}\label{ag}
Let $G$ be a locally compact group, and let $A(G)$ be Fourier algebra of $G$.
\begin{itemize}
\item[(i)] If $H$ is a closed subgroup of $G$, then  $A(G)$ is amenable relative to $I(H)$ if and only if $H$ has an abelian subgroup of finite index.
\item[(ii)] For every closed abelian subgroup $H$ of $G$, the Fourier algebra $A(G)$ is amenable relative to $I(H)$. Especially, $A(G)$ is $I(\{e\})$-amenable, where $e $ is the identity of $G$.
\end{itemize}
\end{theorem}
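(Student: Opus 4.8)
The plan is to identify the quotient algebra $A(G)/I(H)$ with a Fourier algebra of $H$ and then appeal to the known characterization of amenable Fourier algebras. First I would invoke Herz's restriction theorem: for a closed subgroup $H$ of a locally compact group $G$, the restriction map $u\mapsto u|_H$ carries $A(G)$ onto $A(H)$ and is a metric surjection, so that its kernel is exactly $I(H)=\{u\in A(G): u|_H=0\}$. Since restriction is multiplicative (pointwise multiplication restricts to pointwise multiplication), this yields a Banach algebra isomorphism $A(G)/I(H)\cong A(H)$, where the quotient norm on the left coincides with the $A(H)$-norm on the right. In this way the $I(H)$-amenability of $A(G)$ is reduced to the plain amenability of $A(H)$.

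Next I would apply the theorem of Forrest and Runde characterizing amenability of the Fourier algebra: for a locally compact group $K$, the Fourier algebra $A(K)$ is amenable if and only if $K$ possesses an abelian subgroup of finite index. Taking $K=H$, this gives at once that $A(H)$ is amenable if and only if $H$ has an abelian subgroup of finite index. Combining with the identification above, $A(G)$ is $I(H)$-amenable precisely when $H$ has an abelian subgroup of finite index, which is part (i).

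For part (ii), if $H$ is itself abelian then $H$ is an abelian subgroup of finite index (index one) in itself, so the criterion of part (i) applies directly and $A(G)$ is $I(H)$-amenable. The special case $H=\{e\}$ is then immediate, since the trivial group is abelian; alternatively one sees directly that $A(G)/I(\{e\})\cong A(\{e\})\cong\mathbb{C}$, which is amenable.

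The substantive content lies entirely in the two external inputs, so the only real obstacle is verifying that their hypotheses are met: I must confirm that Herz's theorem applies to an arbitrary closed (not necessarily normal) subgroup $H$, and that the quotient norm on $A(G)/I(H)$ genuinely coincides with the norm of $A(H)$, so that the isomorphism is one of Banach algebras and amenability transfers. Once these identifications are secured, all the stated equivalences follow formally from Herz's theorem and the Forrest--Runde characterization.
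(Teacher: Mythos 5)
Your proposal is correct and takes essentially the same route as the paper: the paper identifies $A(G)/I(H)$ isometrically with $A(H)$ by citing Forrest's Lemma 3.8 (which rests on precisely the Herz restriction theorem you invoke, valid for arbitrary closed subgroups), and then applies the Forrest--Runde characterization of amenability of $A(H)$, deducing (ii) from (i) just as you do.
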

\begin{proof}
$(i)$ By \cite[Lemma 3.8]{for0}, $A(G)/I(H)$ is is isometrically isomorphism to $A(H)$. On the other hand, by \cite[Colloraly 2.5]{for}, $A(H)$ is amenable if and only if $H$ has an abelian subgroup of finite index. So we conclude the proof of $(i)$.
\par 
$(ii)$ follows immediately from $(i)$.
\end{proof}
\begin{remark}
Let $A$ be any of the Banach algebras $L^1(G)$, $L^1(G)^{**}$ (with first Arens product), $M(G)$ or $A(G)$. In each of the results of this subsection, there are certain conditions under which the Banach algebra $A$ has a non-trivial closed ideal $I$ such that $A$ is $I$-amenable, whenever $A$ is not amenable. For example let $G$ be the \textbf{ax+b} group with underlying manifold $(0,\infty)\times \mathbb{R}$ and group action $(x,y)(z,w)=(az, y+xw)$. The group $G$ does not have any abelian subgroup of finite index and hence $A(G)$ is not amenable. But by Theorem~\ref{ag}-$(ii)$, $A(G)$ is $I(\{e\})$-amenable, when $I(\{e\})$ is a non-trivial closed ideal in $A(G)$. Therefore, these results provide answers to some of the questions in Section 1 for the Banach algebra $A$.
\end{remark}

%
\bibliographystyle{amsplain}

\end{document}